\numberwithin{equation}{section}
\theoremstyle{plain}
\newtheorem{theorem}{Theorem}
\newtheorem{lemma}{Lemma}[section]
\newtheorem{propos}{Proposition}
\theoremstyle{definition}
\newtheorem{definition}{Definition}
\newtheorem{remark}{Remark}
\newcommand{\kk}{\mathbb{K}}
\newcommand{\codim}{\mathrm{codim}}
\renewcommand{\geq}{\geqslant}
\renewcommand{\leq}{\leqslant}
\begin{document}
\title{Affine toric varieties with an open orbit of an $SL_n$ action}
\author{N.\,Yu.~Medved}
\address{CS Dept. at Higher School of Economics, Moscow}
\email{mednik@mccme.ru}

\keywords {Affine algebraic varieties, toric varieties, algebraic group actions, Cox ring.}

\begin{abstract}
We study affine toric varieties with an action of group $SL_n$ with a dense orbit. A characterisation in terms of $SL_n \times Q$-modules is given where $Q$ is a quasitorus. This characterisation is more explicitly expanded in case $n=3$. It is shown that in case $n=3$ the divisor class group rank is not greater than $3$, however it is unbounded when $n\geq 4$.
\end{abstract}

\maketitle

\section{Introduction}

The ground field $\kk$ is supposed to be algebraically closed of characteristic zero.

In this paper we study irreducible affine toric varieties with a regular action of the group $SL_n$ with an open orbit. A normal irreducible variety $X$ is said to be toric if it admits an effective action of an algebraic torus $T$ with an open orbit.

A classical problem in the study of algrebraic group actions is describing orbit closures, i.e. varieties with a dense orbit of a group action. For $G=SL_2$ all normal varieties with a dense $SL_2$-orbit were described by V.L.~Popov in \cite{popov_quasihomogeneous_1973} using the $U$-invariants. Unfortunately, it seems there is no such description available for $SL_n$ with $n>2$. 

A well-researched class of algebraic varieties is provided by toric varieties. One of the aspects of their importance is that they provide useful examples, as many of their properties can be computed explicitly. In this paper we are interested in affine toric varieties with an open orbit of a regular action of the group $SL_n$. Their properties may be studied to gain intuition to possible properties of arbitrary affine varieties with a dense orbit of an $SL_n$ action.

In the paper \cite{gaifullin_affine_2008} there was provided a description of affine toric varieties with a dense orbit of an $SL_2$ action. 
Extending methods from that paper, we describe all irreducible affine toric varieties with an open orbit of an $SL_n$ action.

In Section \ref{Prelim} we recall basic facts about affine toric varieties and introduce the Cox construction and the total coordinate space of the variety. Also in this section we introduce the notion of a prehomogeneous vector space, that is, a vector space with a regular action of an algebraic group with an open orbit. We show that a unique prehomogeneous vector space may be associated with every affine toric variety with a dense $SL_n$-orbit. In Section \ref{Gale} we obtain the conditions for a prehomogeneous vector space to be in image of this correspondence. This allows us to reduce our problem to classification of the prehomogeneous vector spaces satisfying those conditions. For that we use the classification from \cite{sato_kimura_1977}.

In Section \ref{config} we provide a criterion for an affine space with a linear quasitorus action to be the total coordinate space of an affine toric variety. Applying this result in Section \ref{n=3} we establish which prehomogeneous vector spaces may appear in the case $n=3$. The result is contained in Theorem~\ref{thm_n_3}. One aspect of this classification is that all of them have the class group rank equal to $0, 1, 2$ or $3$. That corresponds nicely to the case $n=2$ where it follows from a well-known result due to V.L.~Popov \cite{popov_quasihomogeneous_1973} that all varieties with a dense $SL_2$ orbit, not necessarily toric ones, have the class group rank either $0$ or $1$. In Section \ref{n=4} we show that such behaviour does not continue when $n \geq 4$, in fact, for any $n \geq 4$ and any $d \in \mathbb{N}$ there is such a variety with class group rank equal $d$. Moreover, the constructed example has trivial stabilizer of the generic point.

The author would like to thank his scientific advisor S.\,A.\,Gaifullin for lots of helpful discussions. The author would also like to thank I.\,V.\,Arzhantsev for providing valuable feedback on the subject of the paper.

\section{Preliminaries}\label{Prelim}

Let $X$ be a toric variety, that is a variety with an open orbit of an effective action of an algebraic torus. It can be described in combinatorial terms by introducing a lattice $N$ and a set of stongly convex polyhedral cones in the space $N \otimes_{\mathbb{Z}} \mathbb{Q}$ satisfying several properties  that is called its fan. For details we refer the reader to \cite{cox2011toric}. We shall require only the case when $X$ is affine in which case the fan has only one cone of maximal dimension. Let its rays have $\rho_1, \ldots, \rho_\tau$ as their primitive vectors in the lattice $N$ of one-parameter subgroups. There is a bijection between those rays and prime $T$-invariant Weyl divisors, let us denote the divisor corresponding to $\rho_i$ as $D_{\rho_i}$.

The notion of the \textit{Cox ring} $\mathcal{R}(X)$ of a toric variety $X$ was formulated by D.~Cox as follows: $\mathcal{R}(X)$ is the polynomial ring of $\tau$ variables $x_1, \ldots, x_\tau$ graded by the divisor class group of $X$: a monomial $\prod x_i^{a_i}$ has degree $\left[\sum a_i D_{\rho_i}\right]$ where $[D]$ denotes class of the divisor $D$.
A similar construction can be introduced in the non-toric case, if some other conditions stay true (see, for example,~\cite{arzhantsev_cox_2015}). In the non-toric case the ring $\mathcal{R}(X)$ is not necessarily the polynomial ring. The Cox ring of a variety is unique up to a graded rings isomorphism. By $\overline{X}$ we denote $\operatorname{Spec} \mathcal{R}(X)$~--- the \textit{total coordinate space} of the variety $X$. Since the homogeneous component of $[0]$ in $\mathcal{R}(X)$ is isomophic to $\kk[X]$ we may consider the morphism $\pi: \overline{X} \to X$ which is called the \textit{Cox realization} of the variety $X$. It may be expressed as the categorical factor by the action of the characteristic quasitorus $Q=\kk[\operatorname{Cl}(X)]$.

Consider an abelian group $K$ and a $K$-graded integral $\kk$-algebra $R$. We say that a nonzero nonunit element $f$ is $K$-\textit{prime} if it is homogeneous and for any homogeneous $g, h$ such that $f|gh$ we have either $f|g$ or $f|h$. We say that $R$ is \textit{factorially $K$-graded} if every homogeneous nonzero nonunit element is a product of $K$-primes.   

Let us denote the categorical factor of $Z$ by the action of $Q$ by $Z/\!/Q$. Let us introduce a criterion for a variety $Z$ with a quasitorus action $Q$ to be a total coordinate space of an affine variety $X$:

\begin{propos}{\rm(Corollary of statement 2.3 in \cite{arzhantsev_cox_2010})}\label{universal}
Consider a quasitorus $Q$ action on a normal irreducible affine variety $Z$ with an open $Q$-invariant subset $U$ such that the following conditions hold:
\begin{enumerate}
\item $\kk[Z]$ is factorially $\operatorname{Cl}(X)$-graded;
\item $\codim_Z (Z \setminus U) \geq 2$;
\item $Q$ acts freely on $U$;
\item every fiber of quotient morphism $\pi: Z \to Z/\!/Q$ intersecting $U$ consists of one $Q$-orbit.
\end{enumerate}
Let us denote $X \cong Z/\!/Q$. Then $Q$ is isomorphic to the characteristic quasitorus of $X$ and $Z$ is isomorphic to the total coordinate space $\overline{X}$.
\end{propos}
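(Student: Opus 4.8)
The assertion is, as its phrasing indicates, a transcription of the characterisation of total coordinate spaces given in \cite[Statement~2.3]{arzhantsev_cox_2010}, so the plan is to recall that statement in the precise form needed and to verify that hypotheses (1)--(4) supply exactly its assumptions. Before doing so it is convenient to read condition (1) as: the grading of $\kk[Z]$ by the character group $K := \mathfrak{X}(Q)$ --- which is how the $Q$-action turns $\kk[Z]$ into a graded algebra --- is factorial in the sense of Section~\ref{Prelim}, the identification of $K$ with $\operatorname{Cl}(X)$ being part of what is to be proved. It is also to be understood, as in the cited statement, that $\kk[Z]^* = \kk^*$; this holds automatically in every situation to which we shall apply the proposition, since there $\overline X$ is a polynomial ring.

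First I would record the elementary structure of $X := Z /\!/ Q$. A quasitorus is reductive, so the categorical quotient of the normal irreducible affine variety $Z$ is again a normal irreducible affine variety, with $\kk[X] = \kk[Z]^Q = \kk[Z]_0$, the homogeneous component of degree $0 \in K$; in particular $\operatorname{Cl}(X)$ will come out finitely generated, as $K$ is. Next I would extract from (2)--(4) the \emph{strong stability} of the $Q$-action. By (4) the preimage $\pi^{-1}(\pi(U))$ is contained in $U$, hence equals $U$; since $Z \setminus U$ is closed and $Q$-invariant, $\pi(U)$ is therefore open in $X$, and $\pi|_U \colon U \to \pi(U)$ is a good quotient all of whose fibres are single orbits, i.e.\ a geometric quotient, which by (3) is a $Q$-torsor over $\pi(U)$. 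Moreover $\codim_X (X \setminus \pi(U)) \geq 2$: a prime divisor of $X$ contained in $X \setminus \pi(U)$ would pull back along the surjection $\pi$ to a $Q$-invariant closed subset of $Z$ of pure codimension one contained in $Z \setminus U$, contradicting (2).

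With these facts in hand, $\operatorname{Spec}\kk[Z]$ meets all the requirements in the definition of the total coordinate space: it carries a $Q$-action whose algebra of functions is normal, finitely generated, factorially $K$-graded and has only constant units, and the quotient $\pi$ is a $Q$-torsor over a big open subset, with big complement upstairs by (2) and (3). Pulling back prime divisors along $\pi|_U$ and using that $\kk[Z]$ is factorially $K$-graded with $\kk[Z]^* = \kk^*$ then produces a canonical isomorphism $\operatorname{Cl}(X) \xrightarrow{\ \sim\ } K = \mathfrak{X}(Q)$ under which the $K$-grading of $\kk[Z]$ becomes precisely the $\operatorname{Cl}(X)$-grading defining $\mathcal{R}(X)$. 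This is the content of \cite[Statement~2.3]{arzhantsev_cox_2010}, and combining it with the uniqueness of the Cox ring up to graded isomorphism (Section~\ref{Prelim}) yields a graded isomorphism $\kk[Z] \cong \mathcal{R}(X)$, hence an isomorphism $Z \cong \overline X$ carrying $Q$ to the characteristic quasitorus $\operatorname{Spec}\kk[\operatorname{Cl}(X)]$.

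The only genuine step beyond notational bookkeeping is the passage, encoded in (2)--(4), from the categorical quotient $\pi$ to the torsor structure over a big open subset together with the resulting computation $\operatorname{Cl}(X) \cong \mathfrak{X}(Q)$; I expect checking that these four conditions are exactly the \emph{local in codimension one} hypotheses of the cited theorem to be the main --- and here essentially the only --- point requiring care.
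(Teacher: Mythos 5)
The paper offers no proof of this proposition at all --- it is stated as a direct corollary of \cite[Statement~2.3]{arzhantsev_cox_2010} --- and your write-up takes exactly that route, correctly reading condition (1) as factoriality of the $\mathfrak{X}(Q)$-grading and checking that (2)--(4) give the torsor-over-a-big-open-subset hypotheses of the cited characterisation. Your elaboration (strong stability of the action, $\codim_X(X\setminus\pi(U))\geq 2$, the resulting isomorphism $\operatorname{Cl}(X)\cong\mathfrak{X}(Q)$, and the implicit need for $\kk[Z]^{*}=\kk^{*}$) is sound and consistent with the intended argument.
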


\begin{propos}\label{1}
Let $G$ be a simply connected semisimple algebraic group acting on an affine variety $X$ with an open orbit, let $Q$ be the characteristic quasitorus of the variety $X$. Then $X$ is toric if and only if there exists such a $(G \times Q)$-module $V$ with an open $(G \times Q)$-orbit that $X$ is $G$-equivariantly isomorphic to $V/\!/Q$ and $V \to V/\!/Q$ is the Cox realisation.
\end{propos}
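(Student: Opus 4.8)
The statement is an equivalence whose two implications are very unequal in difficulty; the ``if'' part is essentially formal, while the ``only if'' part carries the content and reduces to a linearisation claim. \emph{For the ``if'' direction}, suppose a module $V$ as in the statement exists. Since $V\to V/\!/Q$ is the Cox realisation, $\overline{X}\cong V$, so $\mathcal{R}(X)\cong\kk[V]$ is a polynomial ring; writing $\mathcal{R}(X)=\kk[x_1,\dots,x_\tau]$ with its $\operatorname{Cl}(X)$-grading we get $\overline{X}=\mathbb{A}^\tau$, on which the diagonal torus $\hat{T}\cong(\kk^\times)^\tau$ acts, containing and commuting with $Q$. Hence $\hat{T}/Q$ acts on $X=\overline{X}/\!/Q$; as $\dim(\hat{T}/Q)=\tau-\dim Q=\dim X$ and the image of the open $\hat{T}$-orbit of $\mathbb{A}^\tau$ is a dense $(\hat{T}/Q)$-orbit, $X$ is toric. (The open $(G\times Q)$-orbit of $V$ is not used here.)

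\emph{For the ``only if'' direction}, assume $X$ is toric. By the description recalled in Section~\ref{Prelim}, $\mathcal{R}(X)=\kk[x_1,\dots,x_\tau]$ is a polynomial ring, $\overline{X}=\mathbb{A}^\tau$, and $Q$ acts on it linearly through the $\operatorname{Cl}(X)$-degrees of the $x_i$. Since $G$ is simply connected and semisimple, $\mathfrak{X}(G)$ and $\operatorname{Pic}(G)$ are trivial, so the $G$-action on $X$ lifts uniquely to a $G$-action on $\overline{X}$ commuting with $Q$; thus $G\times Q$ acts on $\mathbb{A}^\tau$, with $Q$ linear. This action automatically has an open orbit: choosing $v\in\overline{X}$ over the open $G$-orbit $X_0\subseteq X$ and using that $\pi\colon\overline{X}\to X$ is the $Q$-quotient, the orbit $(G\times Q)v$ maps onto $X_0$ with fibres containing $Q^\circ$-orbits, so $\dim(G\times Q)v\geq\dim X_0+\dim Q^\circ=\dim X+(\tau-\dim X)=\tau$, forcing it open. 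It therefore remains to show that the $(G\times Q)$-action on $\mathbb{A}^\tau$ is linearisable: any $(G\times Q)$-equivariant identification of $\overline{X}$ with a module $V$ turns $\pi$ into the required Cox realisation $V\to V/\!/Q$, $G$-equivariantly, which would finish the proof.

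To linearise I would first show the origin $o\in\mathbb{A}^\tau$ is the $(G\times Q)$-fixed point: it is $Q$-fixed, and it is $G$-fixed because the standard grading of $\kk[\overline{X}]$ restricts to a positive grading of $\kk[X]=\kk[\overline{X}]^Q$ with $\kk[X]_0=\kk$, so $X$ is an affine cone and a semisimple group fixes its vertex (for instance via the $G$-stable descending chain of singular loci, which terminates at a smooth cone, hence an affine space, still containing the vertex). Then, $G$ being reductive, choose a $G$-stable complement $W$ to $\mathfrak{m}_o^2$ in the maximal ideal $\mathfrak{m}_o$; since $G$ and $Q$ commute $W$ may be taken $(G\times Q)$-stable, so it is a $(G\times Q)$-submodule with $W\cong\mathfrak{m}_o/\mathfrak{m}_o^2$ and $\dim W=\tau$. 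If $W$ generates $\kk[\overline{X}]$ as an algebra, then $\operatorname{Sym}(W)\twoheadrightarrow\kk[\overline{X}]$ is a surjection of polynomial rings of equal Krull dimension $\tau$, hence an isomorphism, and $\overline{X}\cong W^{*}=:V$ is a $(G\times Q)$-module, as wanted.

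The hard part is exactly this last point: that $W$ can be chosen to generate $\kk[\overline{X}]$ — equivalently that, after a change of coordinates, the lifted $G$-action preserves the span $\langle x_1,\dots,x_\tau\rangle$ of linear forms. A priori the lift need not preserve any system of coordinates, and not every $(G\times Q)$-stable complement $W$ generates the coordinate ring (spans of the shape $\langle x_i(1+f)\rangle$ with $f$ invariant do not), so this is where semisimplicity of $G$ and the fact that $\overline{X}$ is an affine space must be used essentially. Concretely, I would use the $\operatorname{Cl}(X)$-grading to reduce to a single homogeneous component $\mathcal{R}(X)_{[D_{\rho_i}]}$, on which $G$ acts, and then control how $G$ can move the linear forms $x_j$ of that degree against the higher-order part of the component, excluding any ``curved'' $G$-stable complement — presumably by appealing to the structure of the automorphism group of the affine toric variety $X$ (equivalently, of the graded automorphisms of $\kk[x_1,\dots,x_\tau]$ it induces). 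This is the step I expect to be the technical core of the argument.
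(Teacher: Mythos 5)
Your ``if'' direction and the first half of the ``only if'' direction are sound and close in spirit to the paper: the paper also gets toricness of $V/\!/Q$ from linearity of the $Q$-action, and it handles openness of the lifted orbit by citing \cite[Lemma~1]{arzhantsev2010homogeneous}, for which your dimension count is an acceptable substitute (provided you choose $v$ in the locus where $Q$ acts freely, so that the fibre of $\pi$ restricted to the orbit really has dimension $\dim Q$). The decisive problem is that your proof stops exactly where the content is. You reduce everything to the claim that the lifted $(G\times Q)$-action on $\overline{X}=\mathbb{A}^\tau$ is linearisable, correctly observe that a $(G\times Q)$-stable complement $W$ to $\mathfrak{m}_o^2$ in $\mathfrak{m}_o$ need not generate $\kk[\overline{X}]$ (your example $\langle x_i(1+f)\rangle$ with $f$ a nonconstant $Q$-invariant is exactly the obstruction, and such invariants do exist whenever $\kk[X]\neq\kk$), and then announce that you ``would'' exclude curved complements via the automorphism group of $X$ and that this is the technical core. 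That is not a proof of the step; it is a statement that the step remains to be proved. As written, the argument establishes only that $\overline{X}$ carries a $(G\times Q)$-action with an open orbit and a fixed point, not that $\overline{X}$ is a $(G\times Q)$-module.

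The paper does not prove this step either: its entire forward implication is the citation of \cite[Th.~4.2.3.2]{arzhantsev_cox_2015}, which is precisely the lifting-and-linearisation theorem for actions of connected groups with trivial character and Picard groups on varieties whose Cox ring is polynomial. So to repair your write-up you should either invoke that theorem (which subsumes most of your construction, including the lift and the fixed point) or actually carry out the linearisation. A secondary gap: your argument that $o$ is $G$-fixed shows at most that $G$ fixes the vertex $\pi(o)$ of the cone $X$; this gives only that $G$ preserves the fibre $\pi^{-1}(\pi(o))$, i.e.\ the null-cone of the $Q$-action, which is in general strictly larger than $\{o\}$, so a further argument is needed to conclude that $o$ itself is fixed.
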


\begin{proof}
($\Rightarrow$) Existance of the module immediately follows from applying \cite[Th. 4.2.3.2]{arzhantsev_cox_2015} to the total coordinate space of $X$. The orbit is open by \cite[Lemma~1]{arzhantsev2010homogeneous}.\\
($\Leftarrow$) The factor is obviously toric as the action of $Q$ is linear. The orbit is open again by \cite[Lemma~1]{arzhantsev2010homogeneous}.
\end{proof}

Let us now introduce the notion of a prehomogeneous vector space.
\begin{definition}
A vector space $V$ with a linear action of a connected algebraic group $G$ is called a \textit{prehomogeneous vector space} if this action has a dense orbit.
\end{definition}

In \cite{KIMURA1983} (see also \cite{kimura2002introduction}) there is a list of all prehomogeneous vector spaces for groups of type~$G_s \times Q$ where $G_s$ is simple and $Q$ is a quasitorus. From this list we obtain all prehomogeneous vector spaces of the group $SL_n \times Q$. Let $\Lambda_i$ denote the irreducible representation (Weyl module) of $SL_n$ in $\Lambda^i (\kk^n)$. We are going to identify the trivial representation $\kk^1$ with~$\Lambda_0$. It was shown in \cite{sato_kimura_1977} that any prehomogeneous vector space of~$SL_n \times Q$ must decompose into a direct sum of $\Lambda_0, \Lambda_1, \Lambda_2, \Lambda_3, S^2(\kk^n)$. Moreover,~$\Lambda_3$ may appear only if $n=6,7,8$.

The full list of possible prehomogeneous vector spaces is rather long, thus we present only the case $n=3$ and also a special case for $n>3$, which we will require later.

\begin{propos}\label{case_dim_3}
Every prehomogeneous vector space $V$ of the group $SL_3 \times Q$ is either one of the following, or the one of the conjugate to them:
\begin{enumerate}
\item $\underbrace{\Lambda_1 \oplus \ldots \oplus \Lambda_1}_l \oplus \underbrace{\Lambda_0 \oplus \ldots \oplus \Lambda_0}_r ,$ where $l=0,1,2$ and $Q$ acts with an open orbit on $\Theta(V)=\underbrace{\Lambda_0 \oplus \ldots \oplus \Lambda_0}_r$;
\item $\Lambda_1 \oplus \Lambda_1 \oplus \Lambda_1 \oplus \underbrace{\Lambda_0 \oplus \ldots \oplus \Lambda_0}_r ,$ where $Q$ acts with an open orbit \\ on $\Theta(V)=\left\langle det\right\rangle \oplus \underbrace{\Lambda_0 \oplus \ldots \oplus \Lambda_0}_r$;
\item $\underbrace{\Lambda_1 \oplus \ldots \oplus \Lambda_1}_{l-1} \oplus (\Lambda_1)^{\ast} \oplus \underbrace{\Lambda_0 \oplus \ldots \oplus \Lambda_0}_r,$ where $l$ is either $2$ or $3$ and $Q$ acts with an open orbit on $\Theta(V) = \underbrace{\Lambda_0 \oplus \ldots \oplus \Lambda_0}_r \oplus \left\langle g_1 \right\rangle \oplus \ldots \oplus \left\langle g_{l-1} \right\rangle,$ where $g_i$ is the polynomial that represents the pairing between $i$-th copy of $\Lambda_1$ and~$(\Lambda_1)^{\ast}$.
\end{enumerate}
\end{propos}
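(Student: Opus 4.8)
The plan is to start from the classification of prehomogeneous vector spaces of $SL_3 \times Q$ that follows from \cite{sato_kimura_1977}: such a $V$ must decompose as a direct sum of copies of $\Lambda_0, \Lambda_1, \Lambda_2, S^2(\kk^3)$, where here $\Lambda_2 \cong (\Lambda_1)^\ast$ since $n=3$ (and $\Lambda_3 \cong \Lambda_0$). First I would use a dimension/genericity count to bound how many nontrivial $SL_3$-isotypic summands can occur. The key point is that the generic stabilizer of $SL_3$ acting on a sum of several copies of $\Lambda_1$ (respectively $(\Lambda_1)^\ast$) drops quickly: a single $\Lambda_1$ has generic stabilizer $\cong SL_2 \ltimes \kk^2$ of dimension $6$, two copies give a stabilizer of dimension $3$, three copies give a $1$-dimensional stabilizer (the scalars of determinant one along a flag), and four copies would force a finite stabilizer with an $SL_3$-orbit of dimension $8$ inside a $12$-dimensional space, leaving too much room for $Q$ to fill — more precisely, since $Q$ is abelian its orbits through a point have dimension at most $\dim Q$, and for $SL_3 \times Q$ to act with a dense orbit on $V$ one needs $\dim SL_3\,v + \dim Q\,v \geq \dim V$ for generic $v$, together with the constraint that the $Q$-action descends from a grading by a lattice of rank $\operatorname{rk}\operatorname{Cl}(X)$. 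Combining this with the fact (from the $SL_2$-subvariety analysis, cf. \cite{sato_kimura_1977} and \cite{KIMURA1983}) that $S^2(\kk^3)$ can appear only together with at most a bounded number of other summands, and in fact that mixing $S^2(\kk^3)$ with $\Lambda_1$'s quickly destroys prehomogeneity, I would reduce to the case where $V$ is built only from $\Lambda_0$'s, $\Lambda_1$'s, and $(\Lambda_1)^\ast$'s.

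Next I would organize the remaining cases by the multiset of nontrivial summands. Writing $a$ for the number of $\Lambda_1$ factors and $b$ for the number of $(\Lambda_1)^\ast$ factors, the dimension count $\dim SL_3 \cdot v \leq 8$ forces $a + b \leq 3$ when $ab = 0$, and when both $a,b \geq 1$ the pairing invariants $g_i$ (the canonical $SL_3$-invariant bilinear forms $\Lambda_1 \times (\Lambda_1)^\ast \to \kk$) appear among the $SL_3$-invariants, which cuts the effective dimension available to $Q$; a careful accounting shows $a+b \leq 3$ here as well. This produces exactly the three families in the statement: case (1) is $a \leq 2$, $b = 0$ (no nonconstant $SL_3$-invariant, so $\Theta(V)$ is just the span of the $\Lambda_0$'s); case (2) is $a = 3$, $b = 0$, where now the determinant $\det \colon \Lambda_1 \oplus \Lambda_1 \oplus \Lambda_1 \to \kk$ is a nonconstant $SL_3$-invariant and must be included in $\Theta(V)$; and case (3) is $a + b = 2$ or $3$ with $b \geq 1$, where after applying the outer automorphism (conjugation) that swaps $\Lambda_1 \leftrightarrow (\Lambda_1)^\ast$ we may assume exactly one $(\Lambda_1)^\ast$ factor, and the invariants are the $l-1$ pairings $g_i$. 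In each case the condition that $V$ is prehomogeneous for $SL_3 \times Q$ translates, after quotienting by the $SL_3$-action, precisely into the stated condition that $Q$ act with an open orbit on $\Theta(V)$ (the image of $V$ under the $SL_3$-quotient map, or equivalently the spectrum of the $SL_3$-invariants), since an open $(SL_3 \times Q)$-orbit in $V$ exists if and only if $SL_3$ has an open orbit in a generic fiber of $V \to V /\!/ SL_3$ and $Q$ has an open orbit on the base — here the generic $SL_3$-orbits in the fibers are automatically open by the single-factor computations, so the only remaining requirement is on $Q$ and $\Theta(V)$.

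The main obstacle I expect is the bookkeeping in the mixed case (3): one must verify that $SL_3$ acting on $\bigoplus_{i=1}^{l-1}\Lambda_1 \oplus (\Lambda_1)^\ast$ has an open orbit in the generic fiber of the map to $\operatorname{Spec}$ of its invariants and that those invariants are generated precisely by the pairings $g_i$ (with no further relations or hidden invariants such as lower-rank minors), and to check that no genuinely new prehomogeneous space is lost when one of the $\Lambda_1$ factors is replaced by $(\Lambda_1)^\ast$ beyond what the outer automorphism already accounts for. Handling the borderline $a+b=3$ subcases — distinguishing $(\Lambda_1,\Lambda_1,(\Lambda_1)^\ast)$ from $(\Lambda_1,(\Lambda_1)^\ast,(\Lambda_1)^\ast)$ (conjugate to the former) and from $(\Lambda_1,\Lambda_1,\Lambda_1)$ — and confirming that the determinant in case (2) is the only invariant while in the mixed case the two pairings already exhaust the invariant ring, is where the argument needs the most care; everything else is a direct application of the $SL_n \times Q$ classification together with the elementary stabilizer-dimension estimates for sums of copies of the standard representation.
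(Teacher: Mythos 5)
The paper itself offers no proof of this proposition: it is presented as the $n=3$ extraction of the classification of prehomogeneous vector spaces of $G_s\times Q$ cited from \cite{KIMURA1983} and \cite{sato_kimura_1977}. What you propose is therefore a from-scratch re-derivation, and its central step fails. You bound the number of nontrivial summands via $\dim SL_3\cdot v+\dim Q\cdot v\geq\dim V$ and claim that four copies of $\Lambda_1$ leave ``too much room for $Q$ to fill.'' They do not: with $\dim Q=4$ one has $8+4=12=\dim(\Lambda_1^{\oplus 4})$, and the stabilizer in $SL_3\times(\kk^{\times})^4$ (each factor scaling one copy) of the point $(e_1,e_2,e_3,e_1+e_2+e_3)$ is finite, so the orbit is open and $\Lambda_1^{\oplus 4}$ \emph{is} a prehomogeneous vector space of $SL_3\times Q$; the same computation with the four maximal minors shows $\Lambda_1^{\oplus 3}\oplus(\Lambda_1)^{\ast}$ is one as well. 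So the dimension count cannot deliver the bound of at most three standard/costandard summands on which your whole case analysis rests. (Incidentally, the generic stabilizers of one, two, three copies of $\Lambda_1$ have dimensions $5$, $2$, $0$, not $6$, $3$, $1$.)

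The elimination of $S^2(\kk^3)$ has the same defect: it is asserted (``mixing $S^2(\kk^3)$ with $\Lambda_1$'s quickly destroys prehomogeneity'') rather than proved, and $S^2(\kk^3)$ by itself, with a one-dimensional torus acting by scalars, is a classical regular irreducible prehomogeneous space (the generic $SL_3$-stabilizer of a nondegenerate form is $SO_3$, so the $SL_3\times\kk^{\times}$-orbit has dimension $5+1=6=\dim S^2(\kk^3)$). Thus neither of the two reductions carrying your argument --- discarding $S^2(\kk^3)$ and capping the number of $\Lambda_1$-type summands at three --- is established, and the explicit examples above show they cannot be established by the genericity count you describe; one must instead confront the cited tables directly (which is all the paper does) or supply a genuinely different argument. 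The last part of your plan, translating prehomogeneity of $V$ into the requirement that $Q$ act with a dense orbit on the spectrum of the $SL_3$-invariants, is the correct mechanism and is exactly how the conditions on $\Theta(V)$ in the statement are to be read; but it only applies once the list of admissible $SL_3$-module structures has been legitimately pinned down.
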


\begin{propos}\label{n_by_n}
For any $n \geq 2$ the representation $\underbrace{\Lambda_1 \oplus \ldots \oplus \Lambda_1}_n \oplus \underbrace{\Lambda_0 \oplus \ldots \oplus \Lambda_0}_r $ is a prehomogeneous vector space of the group $SL_n \times Q$ whenever $Q$ acts with an open orbit on $\left\langle det\right\rangle \oplus \underbrace{\Lambda_0 \oplus \ldots \oplus \Lambda_0}_r$.
\end{propos}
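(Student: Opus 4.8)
The plan is to produce the open $SL_n\times Q$-orbit explicitly, as the full preimage of the open $Q$-orbit under the categorical quotient by $SL_n$. Write $V=\Lambda_1^{\oplus n}\oplus\Lambda_0^{\oplus r}$ and identify $\Lambda_1^{\oplus n}$ with the space $\mathrm{Mat}_n(\kk)$ of $n\times n$ matrices, the $i$-th copy of $\kk^n$ being the $i$-th column, so that $SL_n$ acts by left multiplication and trivially on the $\Lambda_0^{\oplus r}$ summand. First I would recall that the algebra of $SL_n$-invariants is $\kk[V]^{SL_n}=\kk[\det,\,t_1,\dots,t_r]$, where $t_1,\dots,t_r$ are the coordinates on $\Lambda_0^{\oplus r}$ — this is classical invariant theory (the first fundamental theorem for $SL_n$ in the degenerate case of exactly $n$ vector arguments, where the only bracket is $\det$). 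Consequently the categorical quotient is the map $\pi\colon V\to V/\!/SL_n$, $(A,w)\mapsto(\det A,w)$, onto $\langle\det\rangle\oplus\Lambda_0^{\oplus r}$; since $Q$ commutes with $SL_n$ its action descends, and under the evident identification the descended $Q$-action is precisely the one figuring in the hypothesis.

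The one geometric input I would isolate is that over the open subset $\{\det\neq 0\}$ of $V/\!/SL_n$ every fibre of $\pi$ is a single $SL_n$-orbit: such a fibre is $\{A:\det A=c\}\times\{w\}$ with $c\neq 0$, and for any such $A$ the matrix $A\cdot\mathrm{diag}(c,1,\dots,1)^{-1}$ lies in $SL_n$ and carries $\mathrm{diag}(c,1,\dots,1)$ to $A$. Now let $\mathcal O\subseteq\langle\det\rangle\oplus\Lambda_0^{\oplus r}$ be the open $Q$-orbit supplied by the hypothesis. Being a nonempty open subset of the irreducible variety $V/\!/SL_n$ it is dense, hence not contained in the hyperplane $\{\det=0\}$; therefore $\mathcal O':=\mathcal O\cap\{\det\neq 0\}$ is nonempty and open in $V/\!/SL_n$. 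Pick any $v\in\pi^{-1}(\mathcal O')$. Then $(SL_n\times Q)\cdot v\supseteq\pi^{-1}(\mathcal O')$: given $w\in\pi^{-1}(\mathcal O')$ we have $\pi(w)=q\cdot\pi(v)=\pi(q\cdot v)$ for some $q\in Q$, so $w$ and $q\cdot v$ lie in one fibre of $\pi$ over a point where $\det\neq 0$, whence $w=g\cdot q\cdot v$ for some $g\in SL_n$ by the previous sentence. Thus the orbit of $v$ contains the nonempty open set $\pi^{-1}(\mathcal O')$; an orbit is open in its closure, and by irreducibility of $V$ that closure is all of $V$. Hence $(SL_n\times Q)\cdot v$ is open and $V$ is a prehomogeneous vector space of $SL_n\times Q$.

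I do not expect a serious obstacle here: the content is concentrated in identifying the $SL_n$-quotient and noticing that its fibres over $\{\det\neq 0\}$ are single orbits, after which everything is formal. The only place calling for a little care is the very last step — passing from ``the orbit contains a nonempty open set'' to ``the orbit is open'', which relies on the standard fact that orbits of algebraic group actions are locally closed together with the irreducibility of $V$. Note in particular that the Sato--Kimura classification is not needed for this direction, since the required open orbit is written down by hand.
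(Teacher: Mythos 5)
Your proof is correct, and it takes a genuinely different route from the paper: the paper gives no proof of this proposition at all, presenting it (like Proposition~\ref{case_dim_3}) as a special case read off from the Sato--Kimura classification of prehomogeneous vector spaces for groups of the form $G_s\times Q$, which it invokes wholesale. Your argument is a direct, self-contained verification: identify $\Lambda_1^{\oplus n}$ with $\mathrm{Mat}_n(\kk)$, use the first fundamental theorem to get $\kk[V]^{SL_n}=\kk[\det,t_1,\dots,t_r]$, observe that the fibres of the quotient over $\{\det\neq 0\}$ are single $SL_n$-orbits (a determinant-$c$ matrix is a left $SL_n$-translate of $\mathrm{diag}(c,1,\dots,1)$), and pull back the open $Q$-orbit; the final passage from ``the orbit contains a nonempty open set'' to ``the orbit is open'' is standard, and can even be done without local closedness, since the orbit is then a union of group translates of that open set. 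What your approach buys is independence from the classification tables for exactly the family that matters most downstream --- this proposition is the input to the unbounded class group rank construction in Theorem~\ref{thm_n_4} --- at the cost of a routine but omitted check that the $Q$-action descended to $V/\!/SL_n$ is the one in the hypothesis (the weight on the coordinate $\det$ is, up to sign convention, the sum of the weights on the $n$ copies of $\Lambda_1$, and an open orbit for a quasitorus is insensitive to that sign). The paper's route buys only uniformity with the $n=3$ case, where the classification genuinely is needed.
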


\section{Gale duality and positively 2-spanning polyhedra}\label{Gale}
We provide a brief introduction to the Gale duality, for more details see, for example, \cite{matousek_lectures_2002}.

\def\mnd{d}
\def\mnn{n}

When we refer to a \textit{collection} of some objects, we mean that any element may belong to the collection in several copies, which we consider to be separate. Note that when we \textit{delete} a member of a collection, this refers to only one copy of the member, so if it existed in several copies, the others may remain. 

\begin{definition}
A \textit{point configuration} $\mathcal{A}$ in an affine space $\mathbb{A}$ over $\mathbb{Q}$ is an arbitrary collection of points $a_1, \ldots, a_\mnn$, not lying in an affine subspace of lesser dimension.

\end{definition}

\begin{remark}
It immediately follows from the definition that $\mnn \geq \mnd+1$. Further we shall consider only the case $n \geq d+2$, some of the statements may be still applicable in the case $n=d+1$ but not all of them. 

\end{remark}

\begin{definition}
A \textit{vector configuration} $\mathcal{G}$ in vector space $W$ over $\mathbb{Q}$ is an arbitrary collection of vectors $g_1, \ldots, g_\mnn$. We shall call $\mathcal{G}$ a \textit{vector configuration with zero sum} whenever $\sum g_i = 0$.

\end{definition}

Let us consider an affine space identified with $\mathbb{Q}^\mnd$ and a point configuration $\mathcal{A}$ consisting of $\mnn$ points $a_1, \ldots, a_{\mnn}$. The Gale transform of $\mathcal{A}$ is a vector configuration with zero sum $\mathcal{G}$, consisting of $\mnn$ vectors in $\mathbb{Q}^{\mnn-\mnd-1}$ defined as follows. Consider the $\mnd \times \mnn$ matrix $A$ having coordinates of the points $a_i$ as its columns. Let us denote a row vector $(\underbrace{1,1,\ldots,1}_\mnn)$ as $e$. We shall append $e$ to the matrix $A$ obtaining a $(\mnd+1) \times \mnn$ matrix $\tilde{A}$. As the points $a_i$ do not lie in an affine subspace of lesser dimension, this matrix is of rank $\mnd+1$. Now we consider the $(\mnd+1)$--dimensional subspace $W$ in $\mathbb{Q}^\mnn$, generated by its rows. Let $b_1, \ldots, b_{\mnn-\mnd-1}$ be a basis of the orthogonal subspace $W^{\perp}$. We write them into the rows of the matrix $B$. Finally, we let $g_i$ be the columns of the matrix $B$. The collection $\mathcal{G}$ of such $g_i$ is called the Gale transform of $\mathcal{A}$.

\begin{remark}
We shall note that the resulting $\mnn$ vectors do not correspond to the $\mnn$ points individually. The Gale transform only establishes a correspondence between collections. Moreover, this correspondence is not one-to-one (see the next lemmas).
\end{remark}

\begin{remark}
The Gale transform image $\mathcal{G}$ has zero sum, as the rows of $B$ are orthogonal to $e$.
\end{remark}

\begin{remark}
Addition of $e$ can be thought of as projectivization of the configuration in the affine space. A linear version of Gale duality can be defined similarly by skipping this step. Note that the Gale transform of an arbitrary vector configuration might not have the sum equal zero, as $e$ no longer necessarily lies in the linear span of the rows of $\tilde{A}$. Moreover, for the linear duality another description exists in terms of tensor products, for details refer to~\cite[Chapter~2.2.1]{arzhantsev_cox_2015}.
\end{remark}

\begin{remark}
\begin{enumerate}
\item For different choice of basis in $W^{\perp}$ the Gale transform images 
$\mathcal{G}=\{g_1,\ldots,g_\mnn\}$ and $\mathcal{G}'=\{g_1', \ldots, g_\mnn'\}$ are the same up to a linear transformation.
\item If two point collections $a$ and $a'$ are the same up to an affine transformation then the matrix $B$ is the same, thus the Gale transform images are the same.
\end{enumerate}
\end{remark}

Let us denote by $\operatorname{AffDep}(\mathcal{A})$ the set of all linear dependancies of the matrix $\tilde{A}$ columns $\{\alpha \in \mathbb{Q}^\mnn: \alpha_1 a_1 + \ldots + \alpha_\mnn a_\mnn =0, \alpha_1+ \ldots + \alpha_\mnn = 0\}$.
As $\operatorname{AffVal}(\mathcal{A})$ we denote the set $\{(f(a_1), \ldots, f (a_\mnn)) \mid f: \mathbb{Q}^\mnd \to \mathbb{Q} \text{~--- an affine function} \}$.
Similarly, $\operatorname{LinDep}(\mathcal{G})$ stands for $\{\alpha \in \mathbb{Q}^{\mnn-\mnd-1}: \alpha_1 g_1 + \ldots + \alpha_\mnn g_{\mnn} =0\}$ and $\operatorname{LinVal}(\mathcal{G})$ for $\{(f(g_1), \ldots, f (g_\mnn)) \mid f: \mathbb{Q}^\mnd \to \mathbb{Q} \text{~--- linear function} \}$.

\begin{lemma}\label{dual}
Let $\mathcal{A}$ be a point configuration in space $\mathbb{Q}^\mnd$ and $\mathcal{G}$~--- the Gale transform of $\mathcal{A}$. Then $\operatorname{AffVal}(\mathcal{A})=\operatorname{LinDep}(\mathcal{G}), \operatorname{AffDep}(\mathcal{A})=\operatorname{LinVal}(\mathcal{G})$.
\end{lemma}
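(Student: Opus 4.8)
The plan is to prove Lemma~\ref{dual} by unwinding the definitions and using the duality between a subspace of $\mathbb{Q}^\mnn$ and its orthogonal complement. Recall that $W \subset \mathbb{Q}^\mnn$ is the row span of $\tilde{A}$, and by construction $W^{\perp}$ is the row span of $B$, so that $(W^{\perp})^{\perp}=W$. The two claimed equalities are in fact the same statement applied to the two transposes, so I would prove one carefully and indicate that the other follows by symmetry (the Gale transform of $\mathcal{G}$, in the linear sense, recovers $\mathcal{A}$ up to the affine/linear correspondence).

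First I would identify $\operatorname{AffVal}(\mathcal{A})$ with $W$ itself. An affine function $f:\mathbb{Q}^\mnd \to \mathbb{Q}$ has the form $f(x)=\langle c, x\rangle + b$ for some $c \in \mathbb{Q}^\mnd$, $b \in \mathbb{Q}$, so the vector $(f(a_1),\ldots,f(a_\mnn))$ equals $c_1(\text{row }1\text{ of }A)+\cdots+c_\mnd(\text{row }\mnd\text{ of }A)+b\,e$, i.e. an arbitrary linear combination of the rows of $\tilde{A}$. Hence $\operatorname{AffVal}(\mathcal{A})=W$. Symmetrically, $\operatorname{LinVal}(\mathcal{G})$ (linear functions evaluated on the $g_i$) is exactly the row span of $B$, namely $W^{\perp}$. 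Next I would observe that $\operatorname{LinDep}(\mathcal{G})=\{\alpha : B\alpha = 0\}=(\text{row span of }B)^{\perp}=(W^{\perp})^{\perp}=W$; here I use that $g_i$ are the columns of $B$, so $\sum \alpha_i g_i = B\alpha$. Likewise $\operatorname{AffDep}(\mathcal{A})=\{\alpha : \tilde{A}\alpha=0\}=(\text{row span of }\tilde{A})^{\perp}=W^{\perp}$, using that the conditions $\sum\alpha_i a_i=0$ and $\sum\alpha_i=0$ together say precisely $\tilde{A}\alpha=0$. Combining: $\operatorname{AffVal}(\mathcal{A})=W=\operatorname{LinDep}(\mathcal{G})$ and $\operatorname{AffDep}(\mathcal{A})=W^{\perp}=\operatorname{LinVal}(\mathcal{G})$.

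There is no serious obstacle here; the lemma is a bookkeeping exercise in linear algebra. The only point requiring a little care is the bookkeeping of which objects live in $\mathbb{Q}^\mnn$ versus $\mathbb{Q}^\mnd$ versus $\mathbb{Q}^{\mnn-\mnd-1}$, and making sure the typo in the statement (the domains of the functions in $\operatorname{LinVal}$ should be $\mathbb{Q}^{\mnn-\mnd-1}$, matching the ambient space of $\mathcal{G}$) does not cause confusion; I would silently read it correctly. I would also note explicitly that all four sets are genuinely well-defined as subspaces of $\mathbb{Q}^\mnn$ independent of the auxiliary choices (the choice of coordinates on the affine space only changes $f$ by a reparametrisation, and the choice of basis of $W^{\perp}$ only changes $B$ by left multiplication by an invertible matrix, which alters neither its row span nor its null space), so the equalities are meaningful. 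That completes the proof.
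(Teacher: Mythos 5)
Your proof is correct and follows essentially the same route as the paper's: both arguments come down to identifying $\operatorname{AffVal}(\mathcal{A})$ with the row span $W$ of $\tilde{A}$ and exploiting the orthogonality $W^{\perp}=\text{row span of }B$ (the paper writes this as the matrix identities $AB^T=0$, $eB^T=0$). Your presentation is slightly more systematic in that it identifies all four sets with $W$ or $W^{\perp}$ at once, but there is no substantive difference.
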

\begin{proof}

For any $(b_1, \ldots, b_\mnn) \in \operatorname{AffVal}(\mathcal{A})$ there exists an affine function $f: \mathbb{Q}^\mnd \to \mathbb{Q}$ such that $f(a_i)=b_i$ for all $i$. Let us consider the coordinates of $f$ by denoting its linear part as a row vector $f_1$ and its constant as $f_0$. As the  rows of $\tilde{A}$ and $B$ are othogonal, we get matrix equations $AB^T = 0, eB^T = 0$. By applying the equation $b = f_1 A + f_0 e$ we get the following:
$$\sum b_i g_i = bB^T = (f_1A+f_0e)B^T = f_1AB^T+f_0eB^T = 0.$$
To prove the reverse note that if $\sum b_i g_i = 0,$ then $b$ is orthogonal to all the rows of the matrix $B$, thus equal to a linear combination of the rows of $\tilde{A}$.

The proof of the second equality is essentially the same.\end{proof}

\begin{lemma}\label{general_position}
All the points of $\mathcal{A}$ lie in general position if and only if every $\mnn-\mnd-1$ vector of $\mathcal{G}$ form a basis in $\mathbb{Q}^{\mnn-\mnd-1}$.
\end{lemma}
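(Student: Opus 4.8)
The plan is to deduce both sides of the equivalence from Lemma~\ref{dual}, after rewriting each of them so that the complementation $T\mapsto\{1,\ldots,\mnn\}\setminus T$ becomes visible. Throughout, ``general position'' means that no $\mnd+1$ of the points $a_1,\ldots,a_\mnn$ lie in a common affine hyperplane, equivalently that for every $T\subseteq\{1,\ldots,\mnn\}$ with $|T|=\mnd+1$ the points $\{a_i:i\in T\}$ are affinely independent. It suffices to test subsets of size exactly $\mnd+1$, since every subcollection of an affinely independent collection is affinely independent and $\mnn\geq\mnd+1$. As $\mnd+1$ affinely independent points of $\mathbb{Q}^\mnd$ affinely span $\mathbb{Q}^\mnd$, affine independence of $\{a_i:i\in T\}$ is equivalent to the statement that the only affine function $f\colon\mathbb{Q}^\mnd\to\mathbb{Q}$ vanishing at all $a_i$ with $i\in T$ is $f\equiv 0$.

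On the Gale side, $\mathcal{G}$ consists of $\mnn$ vectors in the $(\mnn-\mnd-1)$-dimensional space $\mathbb{Q}^{\mnn-\mnd-1}$, so a subcollection $\{g_i:i\in S\}$ with $|S|=\mnn-\mnd-1$ is a basis exactly when it is linearly independent, that is, exactly when no nonzero $\alpha\in\operatorname{LinDep}(\mathcal{G})$ is supported on $S$. Putting $S=\{1,\ldots,\mnn\}\setminus T$ with $|T|=\mnd+1$, the condition ``every $\mnn-\mnd-1$ vectors of $\mathcal{G}$ form a basis'' thus reads: for every $T$ of size $\mnd+1$ there is no nonzero $\alpha\in\operatorname{LinDep}(\mathcal{G})$ with $\alpha_i=0$ for all $i\in T$.

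Now I would invoke Lemma~\ref{dual} in the form $\operatorname{AffVal}(\mathcal{A})=\operatorname{LinDep}(\mathcal{G})$ to match the two reformulations for a fixed $T$. If $f$ is a nonzero affine function vanishing on $\{a_i:i\in T\}$, then $\alpha:=(f(a_1),\ldots,f(a_\mnn))$ lies in $\operatorname{AffVal}(\mathcal{A})=\operatorname{LinDep}(\mathcal{G})$, is supported on $\{1,\ldots,\mnn\}\setminus T$, and is nonzero, because $f\not\equiv 0$ while the $a_i$ are not contained in a proper affine subspace of $\mathbb{Q}^\mnd$. Conversely, a nonzero $\alpha\in\operatorname{LinDep}(\mathcal{G})=\operatorname{AffVal}(\mathcal{A})$ supported on $\{1,\ldots,\mnn\}\setminus T$ has the form $\alpha_i=f(a_i)$ for some affine $f$, and this $f$ vanishes on $\{a_i:i\in T\}$ and is not identically zero (else $\alpha=0$). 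Hence for each $T$ of size $\mnd+1$ the points $\{a_i:i\in T\}$ are affinely independent if and only if $\{g_i:i\in\{1,\ldots,\mnn\}\setminus T\}$ is a basis of $\mathbb{Q}^{\mnn-\mnd-1}$; quantifying over all such $T$ (equivalently over all $(\mnn-\mnd-1)$-element index sets) gives the lemma.

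This is essentially a bookkeeping argument on top of Lemma~\ref{dual}, so I do not expect a substantive obstacle; the points that need care are the reduction of ``general position'' to the test on $(\mnd+1)$-subsets and the passage between a nonzero affine function and a nonzero element of $\operatorname{AffVal}(\mathcal{A})$ --- the latter being precisely where the hypothesis that $\mathcal{A}$ is a point configuration (so that the $a_i$ affinely span $\mathbb{Q}^\mnd$) enters.
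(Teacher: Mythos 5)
Your proof is correct and follows essentially the same route as the paper: both reduce the statement to the identity $\operatorname{AffVal}(\mathcal{A})=\operatorname{LinDep}(\mathcal{G})$ from Lemma~\ref{dual} and match a nontrivial linear dependency supported on an $(\mnn-\mnd-1)$-subset of $\mathcal{G}$ with a nonzero affine function vanishing on the complementary $(\mnd+1)$-subset of $\mathcal{A}$. Your write-up is merely more explicit about the bookkeeping (the reduction to $(\mnd+1)$-subsets and the nonvanishing of the associated $\alpha$), which the paper leaves to the reader.
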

\begin{proof}
Suppose some $\mnn-\mnd-1$ vectors of $\mathcal{G}$ do not form a basis of $\mathbb{Q}^{\mnn-\mnd-1}$. Then there is a linear dependency $\sum b_i g_i = 0$ with no more than $\mnn-\mnd-1$ nonzero elements. By previous Lemma $b$ may be interpreted as an element of $\operatorname{AffVal}(\mathcal{A})$, which means that there exists such an affine function $f$ that it is zero at at least $\mnd+1$ point from $a$. This means there is a hyperplane containing at least $\mnd+1$ point.
Those arguments may be easily followed backwards to prove the reverse.
\end{proof}

\begin{lemma}\label{in_one_face}
Let $I$ be a subset of $\{1,\ldots,\mnn\}$, then the points $\{a_i \mid i \in I\}$ lie in one face of the convex hull $\operatorname{conv}(\mathcal{A})$ if and only if $0 \in \operatorname{conv}\{g_j \mid j \not\in I\}$.
\end{lemma}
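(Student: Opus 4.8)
The plan is to translate the combinatorial condition on the left into the existence of a nonnegative supporting affine functional, and then transport it to the right through the duality of Lemma~\ref{dual}. Recall the standard description of faces: a subset $\{a_i \mid i \in I\}$ of the vertex set lies in a (proper) face of $P := \operatorname{conv}(\mathcal{A})$ precisely when there is an affine function $f \colon \mathbb{Q}^d \to \mathbb{Q}$, not identically zero on $P$, with $f(a_k) \geq 0$ for all $k$ and $f(a_i) = 0$ for all $i \in I$; the face is then $P \cap \{f = 0\}$. If $I = \emptyset$ both sides of the asserted equivalence hold trivially (the right one because $\sum_k g_k = 0$), and if $I = \{1,\ldots,n\}$ both sides fail (using $\operatorname{conv}(\emptyset) = \emptyset$ and that the $a_k$ affinely span $\mathbb{Q}^d$, so no nonzero such $f$ exists); hence I may assume $\emptyset \neq I \neq \{1,\ldots,n\}$.

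For the direction ($\Rightarrow$): given such an $f$, set $b = (f(a_1), \ldots, f(a_n))$. By definition $b \in \operatorname{AffVal}(\mathcal{A})$, so Lemma~\ref{dual} yields $b \in \operatorname{LinDep}(\mathcal{G})$, i.e. $\sum_k b_k g_k = 0$. Since $b_i = 0$ for $i \in I$, this is $\sum_{j \notin I} b_j g_j = 0$ with every $b_j \geq 0$; and not all $b_j$ vanish, because $f$ is not identically zero on $P$ forces $f(a_k) > 0$ for some $k$, necessarily with $k \notin I$. Dividing by $\sum_{j \notin I} b_j > 0$ exhibits $0$ as a convex combination of $\{g_j \mid j \notin I\}$.

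For the direction ($\Leftarrow$): suppose $0 = \sum_{j \notin I} \lambda_j g_j$ with $\lambda_j \geq 0$ and $\sum_{j \notin I} \lambda_j = 1$. Extend $\lambda$ by zero on $I$ to a vector $b \in \mathbb{Q}^n$; then $\sum_k b_k g_k = 0$, so $b \in \operatorname{LinDep}(\mathcal{G}) = \operatorname{AffVal}(\mathcal{A})$ by Lemma~\ref{dual}, meaning $b_k = f(a_k)$ for some affine function $f$. This $f$ satisfies $f(a_k) = b_k \geq 0$ for all $k$, vanishes at every $a_i$ with $i \in I$, and is not identically zero on $P$ since $\sum_k f(a_k) = 1$. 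Therefore $\{a_i \mid i \in I\}$ is contained in the face $P \cap \{f = 0\}$ of $P$.

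The bookkeeping is routine once Lemma~\ref{dual} is in place; the only step that genuinely needs care is the first one, the equivalence between "lies in a face" and "admits a nonnegative affine functional vanishing on $I$ but not on all of $P$". In particular one must pin down the conventions so that the degenerate cases --- the improper face $P$ itself, the empty face, and the possibility that $f$ is a nonzero constant --- are accounted for; isolating $I = \emptyset$ and $I = \{1,\ldots,n\}$ at the outset, together with the convention $\operatorname{conv}(\emptyset) = \emptyset$, disposes of all of them, and the remainder is dictated by the duality.
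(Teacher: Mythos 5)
Your proof is correct and follows essentially the same route as the paper's: encode membership in a face by a nonnegative supporting affine functional vanishing on $\{a_i \mid i \in I\}$, pass to $\operatorname{LinDep}(\mathcal{G})$ via Lemma~\ref{dual}, and read off the convex combination of $\{g_j \mid j \notin I\}$ representing $0$ (and conversely). Your extra care with the degenerate cases $I=\emptyset$, $I=\{1,\ldots,n\}$ and with normalizing the coefficients is a welcome tightening of details the paper leaves implicit, but it is not a different argument.
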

\begin{proof}
If points $\{a_i \mid i\in I\}$ lie in a common face of $\operatorname{conv}(\mathcal{A})$ then there is an affine function $f$ which is zero at $a_i$ and nonnegative at all other points of configuration. Moreover, at some point of configuration it is positive, as all the points of $\mathcal{A}$ may not lie in a hyperplane.
Let us consider the vector $b = \left(f(a_1),f(a_2), \ldots, f(a_\mnn)\right)$, it belongs to $\operatorname{LinDep}(\mathcal{G})$, which implies $\sum b_j g_j = 0$. As $b_i = 0$ for $i \in I$, we can delete those terms from the sum, considering $\sum\limits_{j \not\in I} b_j g_j = 0$ where all $b_j$ are nonnegative. This equation exactly means that $0 \in \operatorname{conv}\{g_j \mid j \not\in I\}$.

It is easy to see that all the logical transitions are reversible.
\end{proof}

\section{Criterion of a Cox ring of an affine toric variety}\label{config}

For the next definition and its discussion in (unrelated) context of combinatorial geometry refer to \cite{2013arXiv1304.7186P}.

\begin{definition}
We shall call a vector configuration in $W$ \textit{positively 2-spanning} if for any linear hyperplane $H$ both open halfspaces $H^+$ and $H^-$ contain at least 2 vectors from the configuration, not necessarily different. An equivalent definition is that when we delete any vector from the configuration, the remaining do not all lie in one closed halfspace.
\end{definition}

\begin{lemma}\label{dual_to_polygon}
A vector configuration $g_1 \ldots g_\mnn$ spanning $\mathbb{Q}^\mnd$ is positively 2-spanning if and only if the Gale dual point configuration $a_1 \ldots a_\mnn$ in $\mathbb{Q}^{\mnn-\mnd-1}$ is the set of vertices of a convex polyhedron without repetitions.
\end{lemma}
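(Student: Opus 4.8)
The plan is to translate each side of the equivalence into a statement about faces of $\operatorname{conv}(\mathcal{A})$ via the face criterion of Lemma~\ref{in_one_face}, and then match them up. Recall that $g_1,\ldots,g_\mnn$ spanning $\mathbb{Q}^\mnd$ means, by Lemma~\ref{dual}, that $\operatorname{LinDep}(\mathcal{G}) = \operatorname{AffVal}(\mathcal{A})$ has the expected dimension, so the Gale dual point configuration $\mathcal{A}$ of $n$ points in $\mathbb{Q}^{\mnn-\mnd-1}$ is well defined (it genuinely affinely spans its ambient space). The two conditions to be reconciled are: (i) for every linear hyperplane $H \subset W = \mathbb{Q}^\mnd$, each open halfspace contains at least two vectors of $\mathcal{G}$; equivalently, deleting any single $g_k$ leaves a configuration not contained in any closed halfspace; (ii) the $a_i$ are the vertices of a convex polyhedron, all distinct.

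First I would unpack (ii). The points $a_i$ fail to be the vertex set of a convex polytope without repetitions exactly when some $a_{i_0}$ lies in the convex hull of the others, i.e. $a_{i_0} \in \operatorname{conv}\{a_j : j \neq i_0\}$ — this covers both the case of a repeated point and the case of a point in the interior of a face or of the whole hull. Now $a_{i_0} \in \operatorname{conv}\{a_j : j \neq i_0\}$ is precisely the statement that there is \emph{no} affine function $f$ with $f(a_{i_0}) < 0$ and $f(a_j) \geq 0$ for all $j \neq i_0$; in other words, the singleton $\{i_0\}$ is \emph{not} ``co-supported'' by a nonneg affine functional vanishing off it. I would phrase this cleanly using Lemma~\ref{in_one_face} with $I = \{1,\ldots,\mnn\}\setminus\{i_0\}$: the set $\{a_j : j \neq i_0\}$ lies in a common face of $\operatorname{conv}(\mathcal{A})$ iff $0 \in \operatorname{conv}\{g_{i_0}\} = \{g_{i_0}\}$, i.e. iff $g_{i_0} = 0$ — but that degenerate route only detects zero vectors, so instead the right move is: $a_{i_0}$ is a vertex not equal to any other $a_j$ iff $\{i_0\}$ is a face of a suitable refined complex, which by the hyperplane-separation argument behind Lemma~\ref{in_one_face} is equivalent to the existence of an affine $f$ with $f(a_{i_0}) = 0$, $f > 0$ elsewhere. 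Dualizing that $f$-vector via Lemma~\ref{dual} gives a linear dependence $\sum b_j g_j = 0$ with $b_{i_0} = 0$ and all other $b_j > 0$, i.e. $-g_{i_0}\cdot 0$ is irrelevant and we get $0 = \sum_{j\neq i_0} b_j g_j$ with strictly positive coefficients, meaning $0$ lies in the relative interior of $\operatorname{conv}\{g_j : j\neq i_0\}$ — which is exactly ``deleting $g_{i_0}$ leaves a configuration whose convex hull contains $0$ in its interior,'' the complement of ``lies in a closed halfspace.''

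So the chain I would write is: $\mathcal{A}$ is a vertex set without repetitions $\iff$ for each $i_0$ the point $a_{i_0}$ is a vertex distinct from the rest $\iff$ for each $i_0$ there is an affine $f$ vanishing exactly at $a_{i_0}$ and positive elsewhere $\iff$ (Lemma~\ref{dual}) for each $i_0$ there is a positive dependence among $\{g_j : j\neq i_0\}$ using all of them $\iff$ for each $i_0$, $\operatorname{conv}\{g_j:j\neq i_0\}$ is not contained in any closed halfspace $\iff$ $\mathcal{G}$ is positively 2-spanning (second form of the definition). The main obstacle is the bookkeeping at the boundary between ``vertex'' and ``in a proper face'': I need the separating functional for $a_{i_0}$ to be required \emph{strictly} positive at \emph{every} other point, and I should double-check that the case where several $a_j$ coincide, or where $a_{i_0}$ lies in the interior of a positive-dimensional face, is genuinely captured — the cleanest way is to argue directly with the ``$a_{i_0} \notin \operatorname{conv}(\text{rest})$'' formulation and invoke a strict-separation / Farkas argument rather than Lemma~\ref{in_one_face} verbatim, since that lemma is stated for functions that are merely nonnegative off $I$. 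I would also note the hypothesis $n \geq d+2$ from the standing remark is what keeps both configurations nondegenerate so that the Gale correspondence and the face language apply. Everything else is a routine unwinding of Lemmas~\ref{dual} and~\ref{in_one_face}.
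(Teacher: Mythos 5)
Your proof is essentially the paper's argument run through the other half of the duality: the paper starts from a failure of positive $2$-spanning, takes the linear functional $h$ witnessing it, and pushes the vector $\bigl(h(g_1),\dots,h(g_n)\bigr)\in\operatorname{LinVal}(\mathcal{G})=\operatorname{AffDep}(\mathcal{A})$ through Lemma~\ref{dual} to exhibit some $a_j$ as a convex combination of the others; you start from the separating affine functional for a vertex $a_{i_0}$ and push $\bigl(f(a_1),\dots,f(a_n)\bigr)\in\operatorname{AffVal}(\mathcal{A})=\operatorname{LinDep}(\mathcal{G})$ the other way to get a positive dependence on $\{g_j: j\ne i_0\}$. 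Both are correct unwindings of the same duality, and your decision to abandon Lemma~\ref{in_one_face} in favour of the strict-separation formulation ``$a_{i_0}\notin\operatorname{conv}\{a_j: j\ne i_0\}$'' is the right one, since it handles repeated points and non-extreme points uniformly.

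There is, however, one step that as written is false for a general vector configuration and silently needs the zero-sum property, which you never invoke. A dependence $0=\sum_{j\ne i_0} b_j g_j$ with all $b_j>0$ only places $0$ in the \emph{relative} interior of $\operatorname{conv}\{g_j: j\ne i_0\}$; if those vectors spanned merely a hyperplane they would still all lie in a closed halfspace, so your final equivalence ``positive dependence using all of them $\iff$ not contained in any closed halfspace'' would fail. It does hold here because deleting one vector from a spanning configuration with $\sum g_i=0$ leaves a spanning configuration: if a linear $h$ vanished on every $g_j$ with $j\ne i_0$, then $h(g_{i_0})=h\bigl(-\sum_{j\ne i_0}g_j\bigr)=0$ as well, contradicting that $\mathcal{G}$ spans $\mathbb{Q}^d$. (The same zero-sum identity is what the paper uses to force $b_j<0$ in its version of the argument.) Adding that one observation --- and, for the converse direction of the same equivalence, the standard remark that a finite set of vectors not contained in any closed halfspace generates all of $\mathbb{Q}^d$ as a cone, whence $0$ is a strictly positive combination of all of them --- completes your proof.
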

\begin{proof}
Let configuration $\mathcal{G}$ be not positively 2-spanning. It means that there exists a vector $g_j$ such that all other lie in a closed halfspace, which means there exists a linear function $h$ such that $h(g_i) \geq 0$ for all $i \neq j$. Let $b_i$ be $h(g_i)$. As the configuration spans the entire space, $h$ cannot be zero on all the vectors from the configuration, thus $\sum\limits_{i\neq j} b_i > 0$, which implies $b_j<0$. By Lemma~\ref{dual} we obtain $b \in \operatorname{AffDep}{\mathcal{A}}$, which implies $\sum b_i a_i = 0$. Dividing by $|b_j|$ and isolating $a_j$ on one side of the equation we obtain $\sum\limits_{i \neq j} \frac{b_i}{|b_j|} a_i = a_j$. From $\sum g_i = 0$ it immediately follows that $\sum b_i = 0$, which implies  $\sum\limits_{i\neq j} \frac{b_i}{|b_j|} = 1$. Thus existance of such a vector $b\in \operatorname{AffDep}{\mathcal{A}}$ with one negative coordinate $b_j$ and all other nonnegative is equivalent to existance of a vertex $a_j$ that lies in convex hull of the others. This condition means exactly that the configuration $a_i$ is not a set of vertices of a convex polyhedron.
\end{proof}

\begin{lemma}\label{scale_one_vector}
Suppose a configuration $g_1 \ldots g_\mnn$ is positively 2-spanning. Then for any positive rational $\lambda_i$ the configuration $\{\lambda_i g_i \}$ is positively 2-spanning too.
\end{lemma}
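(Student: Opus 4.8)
The plan is to work directly from the "equivalent definition" of positive 2-spanning given in the Definition: a configuration is positively 2-spanning if and only if, after deleting any single vector, the remaining vectors do not all lie in one closed halfspace. So I would fix an index $k$ and the scaled configuration $\{\lambda_i g_i\}$, delete $\lambda_k g_k$, and show the remaining vectors $\{\lambda_i g_i : i \neq k\}$ are not contained in any closed halfspace $\{w : h(w) \geq 0\}$ for a linear functional $h$.

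The key observation is that multiplying a vector by a positive scalar does not change which side of a linear hyperplane it lies on: $h(\lambda_i g_i) = \lambda_i h(g_i)$ has the same sign as $h(g_i)$ since $\lambda_i > 0$. Hence, for a fixed linear functional $h$, we have $h(\lambda_i g_i) \geq 0$ for all $i \neq k$ if and only if $h(g_i) \geq 0$ for all $i \neq k$. Since $\{g_i\}$ is positively 2-spanning, after deleting $g_k$ the remaining $\{g_i : i \neq k\}$ are not all in one closed halfspace, so no such $h$ exists; therefore no such $h$ works for the scaled configuration either. As $k$ was arbitrary, $\{\lambda_i g_i\}$ is positively 2-spanning.

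One small point to check is that the scaled configuration still spans $\mathbb{Q}^{\mnd}$, but this is immediate: each $\lambda_i g_i$ spans the same line as $g_i$, so the linear spans coincide. I would write the argument in the halfspace formulation rather than the hyperplane formulation, since deletion of a vector is cleaner to phrase there; alternatively one could use the first formulation and note that $H^+$ and $H^-$ are preserved setwise under the coordinatewise positive scaling, but the deletion form is the most economical. I do not anticipate any real obstacle here — the content is entirely the sign-preservation of multiplication by a positive scalar — so the proof will be just a few lines.
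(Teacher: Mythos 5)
Your proof is correct and rests on exactly the same observation as the paper's: a positive scalar multiple of a vector lies in the same (open or closed) halfspace, so positive 2-spanning is preserved. The paper phrases this via the hyperplane form of the definition and you via the deletion form, but this is an immaterial difference.
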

\begin{proof}
Obviously if a vector $w_i$ is in some open halfspace, then the vector $\lambda_i w_i$ lies in the same halfspace. Thus the lemma is trivial.
\end{proof}

\begin{lemma}\label{monom}
Suppose a vector configuration $g_1, \ldots, g_s$ such that its convex hull contains some neighbourhood of $0$. Fix some index $k$. Then there exists a nonnegative linear dependance $\sum \alpha_i g_i$ with $\alpha_k > 0$.
\end{lemma}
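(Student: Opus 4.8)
The plan is to use Lemma~\ref{dual} to transport the statement to the Gale-dual side, where "nonnegative linear dependence with $\alpha_k > 0$" becomes a statement about an affine function being nonnegative on a point configuration and strictly positive at one point; such a statement is just separation by a hyperplane. Concretely, set $d' = s - (\text{number of independent dependences})$ so that the vectors $g_1,\ldots,g_s$ span a space $W$ of some dimension $m$, and let $a_1,\ldots,a_s \in \mathbb{Q}^{s-m-1}$ be a Gale-dual point configuration, so that by Lemma~\ref{dual} we have $\operatorname{LinDep}(\mathcal{G}) = \operatorname{AffVal}(\mathcal{A})$. A nonnegative dependence $\sum \alpha_i g_i = 0$ with $\alpha_k > 0$ then corresponds exactly to an affine function $f$ on $\mathbb{Q}^{s-m-1}$ with $f(a_i) \geq 0$ for all $i$ and $f(a_k) > 0$.

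Next I would unwind the hypothesis. Saying that $\operatorname{conv}\{g_1,\ldots,g_s\}$ contains a neighbourhood of $0$ is a hypothesis about $\mathcal{G}$ as a configuration spanning $W$; I would first check that it forces $\mathcal{G}$ to be positively 2-spanning (if some $g_j$ could be separated from the others by a hyperplane through $0$, then $0$ would lie on the boundary of, not the interior of, $\operatorname{conv}\{g_i\}$, since the whole configuration would sit in a closed halfspace — actually in that halfspace with $g_j$ possibly on the bounding hyperplane, contradicting $0$ being interior). Then by Lemma~\ref{dual_to_polygon} the point configuration $a_1,\ldots,a_s$ is the vertex set, without repetitions, of a convex polytope $P$. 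In particular $a_k$ is a genuine vertex of $P$: there is an affine function $f$ that attains its minimum over $P$ exactly at $a_k$, with $f(a_k)$ equal to that minimum value. Replacing $f$ by $f - f(a_k) + \varepsilon$ for a small positive $\varepsilon$... no — better: take $f$ supporting the vertex $a_k$ so that $f(a_k) < f(a_i)$ for all $i \neq k$, then subtract the constant $f(a_k)$ to get $\tilde f$ with $\tilde f(a_k) = 0$; this is the wrong sign. Instead I take the affine function $h$ with $h(a_k) > 0 = \min_{i\neq k} $ is not automatic either. Cleanest: choose an affine $f$ with $f(a_k) = 0$ and $f(a_i) > 0$ for $i \neq k$ (possible since $a_k$ is a vertex, hence strictly separated from $\operatorname{conv}\{a_i : i \neq k\}$), then set $g := \varepsilon - f$ for suitable $\varepsilon$... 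Actually the honest route: since $a_k$ is a vertex, pick a supporting affine functional $f$ with $f(a_k)=0$, $f(a_i)\ge 0$ all $i$, and $f(a_i)>0$ for at least one $i$; then $1 - f/M$ won't stay nonnegative. I'll instead argue directly: a vertex $a_k$ of $P$ is, by definition, not in $\operatorname{conv}\{a_i : i \neq k\}$ and not a repetition, so by the separation theorem there is an affine function $f$ with $f(a_k) > c \geq f(a_i)$ for all $i \neq k$; replacing $f$ by $f - c$ makes $f(a_k) > 0$ and $f(a_i) \leq 0$ for $i \neq k$, and then $-f$ is the affine function we want: $(-f)(a_i) \geq 0$ for all $i \neq k$ and $(-f)(a_k) < 0$ — wrong sign again, so take $f - c$ as is: $f(a_i) \le 0$ for $i\ne k$ is what we do not want. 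The correct final choice is the affine function $\phi := c - f$ where $f$ separates with $f(a_k)<c\le f(a_i)$; such $f$ exists because $a_k\notin\operatorname{conv}\{a_i: i\ne k\}$; then $\phi(a_k)=c-f(a_k)>0$ and $\phi(a_i)=c-f(a_i)\le 0$. Hmm, that gives $\le 0$ off $k$, opposite of the claim. So the claim's sign convention means I actually want $a_k$ to be the \emph{unique minimizer}: $\phi(a_k) < \phi(a_i)$, normalize $\phi(a_k)=0$, then $\phi \ge 0$ everywhere and $\phi(a_i) > 0$ for $i \neq k$ — but the lemma wants $\alpha_k > 0$, i.e. $\phi(a_k) > 0$. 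The resolution is that we need the dependence \emph{strictly positive in the $k$-th slot}, not the others, so translate: since $a_k$ is a vertex there is affine $\psi$ with $\psi(a_k)>0$ and $\psi(a_i)\ge 0$ for all $i$ (take the supporting functional of the vertex, pointing inward, plus a constant pushing everything up while keeping $a_k$ where the functional is already maximal—). This sign-bookkeeping is the one genuinely fiddly point.

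The main obstacle, then, is not conceptual but this sign/normalization step: producing, from "$a_k$ is a vertex of the polytope $P$", an affine function that is nonnegative on all of $\mathcal{A}$ and strictly positive at $a_k$. The clean way is: a vertex $a_k$ of $P$ is exposed, so there is an affine functional $f$ with $f(a_k) = \max_P f$ and $f(a_i) < f(a_k)$ for all $i \neq k$ (using no repetitions); then $f - m$, where $m := \min_i f(a_i)$, is nonnegative on all of $\mathcal{A}$ and equals $f(a_k) - m > 0$ at $a_k$ provided $f$ is nonconstant on $\mathcal{A}$, which holds because $\mathcal{A}$ affinely spans $\mathbb{Q}^{s-m-1}$ (and $f$ is a nonzero functional since it separates $a_k$). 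Feeding this $b_i := (f-m)(a_i)$ back through Lemma~\ref{dual} as an element of $\operatorname{AffVal}(\mathcal{A}) = \operatorname{LinDep}(\mathcal{G})$ yields the desired nonnegative dependence $\sum b_i g_i = 0$ with $b_k > 0$, completing the proof. I should also dispatch the degenerate case where the number of vectors equals the dimension plus one or where there are no nontrivial dependences at all (then $0$ interior to $\operatorname{conv}\{g_i\}$ still forces, via Carathéodory, a strictly positive combination summing to $0$, handled directly without Gale duality), so that the hypotheses of the cited lemmas are actually met.
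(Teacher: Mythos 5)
Your reduction to the Gale-dual point configuration has two genuine gaps. First, the implication you use to enter the duality --- that ``$0$ lies in the interior of $\operatorname{conv}\{g_i\}$'' forces the configuration to be positively 2-spanning --- is false. Take $g_1=(1,0)$, $g_2=(0,1)$, $g_3=(-1,-1)$ in $\mathbb{Q}^2$: the convex hull is a triangle containing a neighbourhood of $0$, yet the open halfspace $\{x+y<0\}$ contains only one vector of the configuration. So you cannot invoke Lemma~\ref{dual_to_polygon}, and the assertion that $a_k$ is a vertex of a convex polytope (the engine of your separation argument) is unsupported. Second, the lemma does not assume $\sum g_i=0$, so $\mathcal{G}$ need not be the Gale transform of any point configuration at all (adjoin $g_4=(1,1)$ to the triangle above); Lemma~\ref{dual} as stated is only available for the affine duality, and the paper's remark on the linear variant warns exactly that the zero-sum condition can fail. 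Note also that in the zero-sum case all of your sign-chasing is unnecessary: the constant affine function $1$ lies in $\operatorname{AffVal}(\mathcal{A})=\operatorname{LinDep}(\mathcal{G})$ and already yields the all-ones dependence, with every coefficient positive. The content of the lemma is precisely the non-zero-sum case, which your route cannot reach.

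The statement has a short direct proof, which is what the paper does: since $0$ is interior to $\operatorname{conv}\{g_i\}$, the cone generated by the $g_i$ is the whole span (the paper phrases this via a triangulation of the boundary of the hull, producing a complete fan of simplicial cones and locating $-g_k$ in one of them). Hence $-Ng_k=\sum_i\beta_i g_i$ with all $\beta_i\geq 0$ for some $N>0$, and $Ng_k+\sum_i\beta_i g_i=0$ is the required dependence with $k$-th coefficient at least $N>0$. Your closing remark about Carath\'eodory is essentially this argument; it should be the whole proof, not a footnote for degenerate cases.
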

\begin{proof}
If we triangulate the surface of the convex hull we obtain a partition of the space into strongly convex simplicial cones. Consider the cone containing $-g_k$. Then by a well-known lemma there is a coefficient $N$ such that $-Ng_k$ may be expressed as a nonnegative linear combination of the elements that are generating the rays of this cone.
\end{proof}

Let $\overline{M}$ be a finitely generated abelian group.
We set $M$ to be the factorgroup $\overline{M}/\operatorname{Tor}(\overline{M})$.
For an element $\overline{w}$ of $\overline{M}$ let $w$ denote its image in $M$.
Let $M_{\mathbb{Q}} = M \otimes_{\mathbb{Z}} \mathbb{Q}$.
\begin{definition}
In the above notation we say that a collection
$\{\overline{w_i}\}$ in $\overline{M}$ satisfies the condition~$\ast$ if two following conditions hold:
\begin{enumerate} 
\item the configuration $\{w_i\}$ in $M_{\mathbb{Q}}$ is positively 2-spanning;
\item if we delete any $\overline{w_i}$, the rest generate $\overline{M}$.
\end{enumerate}
\end{definition}

If $Q$ is a quasitorus let $\mathfrak{X}(Q)$ denote the character group of $Q$, that is, the group of all homomorphisms $Q \to \kk^{\times}$.

Let us prove the following criterion, which allows to determine whether an affine space with a quasitorus action is a total coordinate space of an affine toric variety:

\begin{propos}\label{crit}
Suppose a quasitorus $Q$ acts linearly on $V = \kk^l$. Let $\overline{w_i} \in \overline{M}=\mathfrak{X}(Q)$ be the weights of the coordinate functions $x_i$. Then $V$ equipped with this action of $Q$ is the Cox realisation of an affine toric variety $V/\!/Q$ if and only if $\{\overline{w_i}\}$ satisfy condition~$\ast$.
\end{propos}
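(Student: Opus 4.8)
The plan is to apply Proposition~\ref{universal} with $Z = V = \kk^l$, $U$ the open subset where $Q$ acts freely, and to check that the four hypotheses there are equivalent — under the standing assumptions — to condition~$\ast$. The passage from a linear $Q$-action on $\kk^l$ to the combinatorial data is the standard dictionary: the weights $\overline{w_i} \in \mathfrak{X}(Q)$ determine the $\operatorname{Cl}$-grading, and since $\overline{M} = \mathfrak{X}(Q)$ is the full weight group, the grading group of $\kk[V] = \kk[x_1,\ldots,x_l]$ is precisely $\overline{M}$. The torsion-free quotient $M$ and its rationalization $M_{\mathbb{Q}}$ are exactly where the positively 2-spanning condition lives, so I want to translate conditions (2),(3),(4) of Proposition~\ref{universal} into statements about the $w_i$ in $M_{\mathbb{Q}}$ and about the $\overline{w_i}$ generating $\overline{M}$.

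I would organize the argument as a chain of equivalences, handling each Proposition~\ref{universal} hypothesis in turn. First, condition (1) — that $\kk[V]$ is factorially $\operatorname{Cl}$-graded — is automatic for a polynomial ring graded by weights, since each variable $x_i$ is $\overline{M}$-prime (a monomial divides a product of monomials only if the exponents are dominated), so this hypothesis imposes nothing. Next, for conditions (3) and (4): $Q$ acts freely on a point $v$ with $i$-th coordinate nonzero for $i \in J$ iff the stabilizer is trivial, and the subgroup of $Q$ fixing such $v$ is $\bigcap_{i\in J}\ker \overline{w_i}$, which is trivial for all maximal coordinate subspaces (i.e. complements of single coordinate hyperplanes are enough to look at) precisely when every subcollection $\{\overline{w_i} : i \neq j\}$ generates $\overline{M}$ — this is condition~$\ast$(2). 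The fibers-are-single-orbits condition (4) is the separatedness half and I expect it to follow from the same generation condition together with the quotient being geometric over $U$; I would cite the standard fact that a linear quasitorus action with $U$ as above has $\pi|_U$ a geometric quotient exactly under these conditions, or derive it directly. Finally, condition (2), $\codim_V(V\setminus U)\geq 2$: the bad locus $V\setminus U$ is the union of coordinate subspaces $\{x_i = 0 : i \in I\}$ over those $I$ for which $\bigcap_{i\in I}\ker\overline{w_i}$ is nontrivial; demanding that every such subspace have codimension $\geq 2$ means: whenever deleting the single coordinate hyperplane $x_j \neq 0$ still leaves $Q$ acting nonfreely, i.e. whenever $\{\overline{w_i}: i\neq j\}$ fails to generate — but that is already excluded by $\ast$(2); the genuinely new input is that over $\mathbb{Q}$ the cones must be positively 2-spanning. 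Here I would invoke the description: $V\setminus U$ has codimension $\geq 2$ iff, after deleting any single $g_i$ (where $g_i$ are the weight vectors in $M_{\mathbb{Q}}$ read as a vector configuration), the rest do not all lie in one closed halfspace — which is precisely the "deletion" form of the positively 2-spanning definition, i.e. $\ast$(1).

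For the converse direction, I would assume condition~$\ast$ and run the same identifications backwards to verify all four hypotheses of Proposition~\ref{universal}, concluding that $V \to V/\!/Q$ is the Cox realization of $X := V/\!/Q$, which is toric because the $Q$-action is linear (so the quotient carries the residual torus action with a dense orbit). The main obstacle I anticipate is condition~(4) — showing that fibers over $U$ are single $Q$-orbits — because unlike the freeness and codimension conditions it is not a pointwise linear-algebra statement but a genuine separatedness/closedness assertion about the quotient; I would handle it by combining $\ast(2)$ (which controls stabilizers, hence orbit dimensions) with a GIT argument that all orbits meeting $U$ are closed in $V$, perhaps reducing to the torus case via the exact sequence $1 \to Q^\circ \to Q \to \pi_0(Q) \to 1$ and the fact that for a torus acting linearly on $\kk^l$ the relevant orbits are closed exactly when the weight configuration has the stated spanning property. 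A secondary technical point is bookkeeping with the torsion of $\overline M$: the positively 2-spanning condition is purely about $M_{\mathbb Q}$, while generation is about $\overline M$ itself, and I must make sure the translation of the geometric conditions keeps these two layers correctly separated.
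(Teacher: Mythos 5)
Your overall strategy --- apply Proposition~\ref{universal} to $Z=V$ and translate its four hypotheses into conditions on the weights --- is the same as the paper's, and your treatment of factoriality and of the freeness/generation dictionary (stabilizers as $\bigcap_{i\in J}\ker\overline{w_i}$, hence condition~$\ast$(2)) matches the paper's Lemma~\ref{proper_subgroup}. But there are two genuine problems. First, the choice $U=\{\text{free locus}\}$ does not work in the converse direction: take $Q=\kk^{\times}$ acting on $\kk^4$ with weights $1,1,-1,-1$, so that condition~$\ast$ holds; the point $(1,0,0,0)$ has trivial stabilizer, yet every generator of the invariant ring ($x_1x_3$, $x_1x_4$, $x_2x_3$, $x_2x_4$) vanishes there, so its fiber is the whole null cone $\{x_1=x_2=0\}\cup\{x_3=x_4=0\}$ and hypothesis (4) of Proposition~\ref{universal} fails for this $U$. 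The paper instead takes $U$ to be the locus of points with at most one vanishing coordinate, and that choice is essential. Second, and relatedly, you have attached the positively 2-spanning condition $\ast$(1) to the wrong hypothesis: the codimension of the non-free locus is governed entirely by the generation condition $\ast$(2) (in the example above the non-free locus is just the origin, whatever the signs of the weights), whereas $\ast$(1) is exactly what is needed for hypothesis (4), that fibers meeting $U$ are single orbits. Your asserted equivalence ``$\codim_V(V\setminus U)\geq 2$ iff the configuration is positively 2-spanning'' is false.

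The real content of the proposition --- why $\ast$(1) forces fibers over the good locus to be single orbits, and conversely why its failure produces two points of one fiber lying in different orbits --- is only gestured at in your sketch (``a GIT argument'', ``the standard fact that the relevant orbits are closed''). The closedness criterion you invoke is correct once $U$ is chosen as above (by the one-parameter-subgroup criterion: deleting one weight leaves a set not contained in any closed halfspace, so no degeneration of such a point is possible), but it has to be stated for the right $U$ and actually proved. The paper does this concretely: for each $k\neq j$ it uses Lemma~\ref{monom} to manufacture an invariant monomial avoiding $x_j$ with positive exponent on $x_k$, which forces the whole fiber back into $U$, where all orbits have dimension $\dim Q$ and hence none lies in the closure of another. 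The forward implication for $\ast$(1) --- exhibiting $v$ and $v'$ in one fiber but in different orbits when the configuration is not positively 2-spanning --- is likewise missing. As written, the proposal would not yield a correct proof without repairing the choice of $U$, reassigning $\ast$(1) to hypothesis (4), and supplying these arguments.
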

\begin{proof}
($\Rightarrow$) Let $\pi: V \stackrel{/\!/Q}{\twoheadrightarrow} X$ be the Cox representation of a toric variety $X \cong V/\!/Q$. Let us prove that the condition~$\ast$ holds. By Proposition \ref{universal} there is an open subset $U \subset V$, such that:
\begin{itemize}
\item $\codim_{V} V\setminus U \geq 2$, 
\item $Q$ acts freely on $U$,
\item every fiber of $\pi$ intersecting $U$ consists of exactly one $Q$-orbit.
\end{itemize}

Suppose there is an index $j$ such that $\{\overline{w_1}, \ldots \overline{w_{j-1}}, \overline{w_{j+1}}, \ldots \overline{w_l}\}$ do not generate $\overline{M}$. We can rephrase it as follows: let
$$B=\left<\overline{w_1}, \ldots \overline{w_{j-1}}, \overline{w_{j+1}}, \ldots \overline{w_l}\right> \subset \overline{M},$$ then $B$ is a proper subgroup. 

\begin{lemma}\label{proper_subgroup}
Assume $\overline{M} = \mathfrak{X}(Q)$. Then a subgroup $B\subset \overline{M}$ is proper if and only if there is an element $s\in Q$ such that $s$ is not the identity and $b(s)=1$ for all $b\in B$.
\end{lemma}
\begin{proof}
Let $F$ be a finitely generated free abelian group, $\tau: F \to \overline{M}$ a surjection. Let $C=\tau^{-1}(B)$. From $B \varsubsetneq \overline{M}$ immediately follows $C \varsubsetneq F$. Let us choose coordinated bases $\{f_i\}$ and $\{d_i f_i\}$ in free abelian groups $F$ and $C$. If $d_k=0$ for some index $k$ then consider $\phi: F \to \kk^{\times}$ given by formula $\phi(f_k) = \zeta$ and $\phi(f_i) = 1$ for all $i \neq k$ where $\zeta$ is an arbitrary nonunit element of $\kk^{\times}$. If all $d_i \neq 0$ then pick an arbirary $k$ and set $\phi(f_k) = \sqrt[d_k]{1}, \varphi(f_i) = 1$ for all $i \neq k$. See that in both cases $\phi \neq 1$, but $\phi(\ker \tau)=\phi(C) = 1$. Thus $\phi$ factors through $\tau$ and we obtain some $s: \overline{M} \to \kk^{\times}$ by formula $s = \phi(\tau^{-1}).$ This is the required $s$. The second part of the statement is obvious.
\end{proof}

Let us consider the element $s \in Q$ from Lemma \ref{proper_subgroup}. Let us show that $Q$ does not act freely on $U$. Indeed, $\overline{w_i}(s) = 1$ for all $i \neq j$, which means that all coordinates $x_i$ are invariant with respect to $s$ for $i \neq j$. Thus $s$ acts trivially on $U \cap \{x_j=0\}$. But this set is nonempty since $\codim_{V} V\setminus U \geq 2$.

Now let us assume that the configuration $\{w_i\}$ is not positively 2-spanning. It means there is an index $j$ and a closed halfspace $\alpha^+$ such that for all $i \neq j$ we have $w_i \in \alpha^+$. By $\alpha$ let us denote the hyperplane that is the border of $\alpha^+$. Let $K \subset \{1,\ldots, l\} \setminus\{j\}$ be the set of indices such that for all $k \in K$ we have $w_k \not \in \alpha$.

If $K$ is nonempty then let us pick an arbitrary $k \in K$ and consider the set $U_k=(\{x_j=0\} \cap U) \cap \{x_k \neq 0\}$. It is not empty as it is an intersection of nonempty open sets in $\{x_j = 0\}$. Thus we may consider an arbitrary vector $v \in U_k$. Let us consider another vector $v'$ obtained from $v$ by replacing the $k$-th coordinate by $0$. 
Every regular $Q$-invariant containing $x_k$ should contain $x_j$, thus equals $0$ at both $v$ and $v'$. The regular $Q$-invariants that do not contain $x_k$ are equal on $v$ and $v'$. Thus all regular $Q$-invariants are the same on $v$ and $v'$ but they lie in different orbits of the quasitorus, which contradicts the assumptions.

If $K$ is empty then there is no regular $Q$-invariant containing $x_j$. Analogously to the previous paragraph we may consider $v \in U \setminus \{x_j = 0\}$ and $v'$ obtained by replacing the $k$-th coordinate by $0$.
The same arguments apply.

($\Leftarrow$)
Let us assume that the condition $\ast$ hold. We again apply Proposition~\ref{universal}. The factoriality holds automatically as the polynomial ring is factorial, thus it is homogeneously factorial with respect to any grading. As $U$ let us choose the set
$$U = V \setminus \bigcup\limits_{i\neq k} \{ x_i = 0, x_k = 0\},$$
that is the set of points that have no more than one zero coordinate. We see that $\codim_V V\setminus U = 2$ holds.  

Let us show that $Q$ acts freely on $U$. Indeed, suppose there is an element $s\in Q$ stabilizing a point $u\in U$. That means that $s$ acts trivially on all coordinate functions not vanishing at $u$. So there is $j$ such that 
$s$ acts trivially on all coordinate functions except $x_j$.
As $\{\overline{w_i} | i \neq j\}$ generate $\overline{M}$ we see that for every $\overline{w} \in \overline{M}$ there is a representation $\overline{w} = \sum\limits_{i\neq j} \alpha_i \overline{w_i}$. All $\overline{w_i}(s)=1$ for $i\neq j$ thus $\overline{w}(s)=1$ for all $\overline{w} \in \overline{M}$.
Thus $s$ equals $1$.

Finally, let us show that for every $u \in U$ the preimage $\pi^{-1}(\pi(u))$ is exactly one $Q$-orbit. Let us fix some point $u \in U$ and suppose that every its coordinate except $x_j$ is not $0$. Let $u'$ be another point in $\pi^{-1}(\pi(u))$, we are going to show that every its coordinate except $x_j$ is also not $0$. This would imply that $u' \in U$. Let us pick a coordinate $x_k$ where $k \neq j$. As the configuration $\{w_i\}$ is positively 2-spanning there is a nonnegative linear dependance $\sum\limits_{i \neq j} \alpha_i w_i = 0$. By Lemma~\ref{monom} one can get the coefficient $\alpha_k$ to be positive. Consider a corresponding $M$-homogeneous monomial $m = \prod\limits_{i \neq j} x_i^{\alpha_i}$. We raize it to some power $d$ so that $m^d$ is $\overline{M}$-homogeneous, that is, $Q$-invariant. Note that $m^d$ is nonzero at $u$, thus it is nonzero at every point in $\pi^{-1}(\pi(u))$. This implies that for any $u'$ in $\pi^{-1}(\pi(u))$ its $k$-th coordinate is nonzero. As this stands for every $k$ except $j$, we obtain that $u' \in U$, in other words $\pi^{-1}(\pi(u)) \subset U$. All the $Q$-orbits in $U$ are of the same dimension, thus one of them cannot lie in the closure of another, so we obtain that $\pi^{-1}(\pi(u))$ consists of only one $Q$-orbit.
\end{proof}

From Prop.~\ref{1} and Prop.~\ref{crit} obviously follows the following proposition:
\begin{propos}\label{result}
Affine toric varieties with an action of a simply connected semisimple group $G$ with an open orbit are categorical factors by the action of $Q$ of $(G\times Q)$-modules with an open $G\times Q$-orbit for which the condition~$\ast$ holds .
\end{propos}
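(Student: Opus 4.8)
The plan is to read Proposition~\ref{result} as an equivalence and to deduce each implication by concatenating Proposition~\ref{1} with Proposition~\ref{crit}; no new work beyond keeping track of which quasitorus is which should be required, since the excerpt already announces the statement as a formal corollary.

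First I would prove the forward implication. Let $X$ be an affine toric variety carrying an action of a simply connected semisimple group $G$ with an open orbit, and let $Q$ be its characteristic quasitorus. Applying the ($\Rightarrow$) part of Proposition~\ref{1} produces a $(G\times Q)$-module $V$ with an open $(G\times Q)$-orbit such that $X$ is $G$-equivariantly isomorphic to $V/\!/Q$ and $V\to V/\!/Q$ is the Cox realisation of $X$. In particular $V$ is the Cox realisation of the affine toric variety $V/\!/Q$, so the ($\Rightarrow$) part of Proposition~\ref{crit}, applied to the linear $Q$-action on $V$, shows that the weights $\overline{w_i}\in\mathfrak{X}(Q)$ of the coordinate functions of $V$ satisfy condition~$\ast$. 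Hence $X$ is realised as $V/\!/Q$ for a $(G\times Q)$-module with an open orbit satisfying~$\ast$.

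For the converse I would start from a $(G\times Q)$-module $V$ with an open $(G\times Q)$-orbit whose coordinate weights satisfy condition~$\ast$, and set $X:=V/\!/Q$. By the ($\Leftarrow$) part of Proposition~\ref{crit}, $V$ is the Cox realisation of the affine toric variety $X$, and (through Proposition~\ref{universal}, which it invokes) $Q$ is the characteristic quasitorus of $X$. Now all the hypotheses of Proposition~\ref{1} hold for $X$ with this $Q$: $V$ is a $(G\times Q)$-module with open $(G\times Q)$-orbit, $X\cong V/\!/Q$ $G$-equivariantly, and $V\to V/\!/Q$ is the Cox realisation; hence the ($\Leftarrow$) part of Proposition~\ref{1} yields that $X$ is toric with an open $G$-orbit (the openness being \cite[Lemma~1]{arzhantsev2010homogeneous}). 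This closes the equivalence. The only genuinely delicate point, and it is a minor one, is to ensure the same quasitorus $Q$ serves both roles: in the forward direction it is fixed at the outset as the characteristic quasitorus of $X$ and Proposition~\ref{1} returns a module over precisely that $Q$, while in the backward direction one must use the assertion in Proposition~\ref{universal}/\ref{crit} that the acting quasitorus of a Cox realisation is canonically the characteristic quasitorus, so that Proposition~\ref{1} may be applied at all. Beyond this bookkeeping the argument is a formal chaining of the two propositions, and I do not anticipate any real obstacle.
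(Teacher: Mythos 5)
Your proposal is correct and matches the paper exactly: the paper states Proposition~\ref{result} as an immediate consequence of Proposition~\ref{1} and Proposition~\ref{crit}, which is precisely the chaining you carry out (with the quasitorus bookkeeping made explicit).
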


We also provide two lemmas about positively 2-spanning configurations that are going to be used later.

\begin{lemma}\label{d+3}
Suppose a collection $\{w_1,\ldots w_n\}$ in an $s$-dimensional space is positively 2-spanned. Then either $s=0$ or $n \geq s+3$.
\end{lemma}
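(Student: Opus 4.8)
The plan is to argue by contradiction, reducing everything to elementary linear algebra on the space of linear dependences of $w_1,\dots,w_n$. Assume $s\geq 1$ (for $s=0$ there is nothing to prove). Recall the second form of the definition: after deleting any single $w_i$, the remaining $n-1$ vectors do not all lie in a closed halfspace. A finite subset of $\mathbb{Q}^s$ lies in no closed halfspace exactly when its conic hull is all of $\mathbb{Q}^s$, so this says that $\{w_j : j\neq i\}$ positively spans $\mathbb{Q}^s$ for every $i$. Since at most $s$ vectors can never positively span $\mathbb{Q}^s$ (their conic hull is contained in a proper subspace or is a pointed simplicial cone), we get $n-1\geq s+1$, i.e. $n\geq s+2$. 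It remains to rule out $n=s+2$.

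Suppose $n=s+2$. Let $A$ be the $s\times n$ matrix with columns $w_1,\dots,w_n$; since the $w_i$ positively span, they span linearly, hence $\operatorname{rank}A=s$ and $L:=\ker A$ has dimension $n-s=2$. Fix $i$. Because $\{w_j : j\neq i\}$ positively spans $\mathbb{Q}^s$, there is a linear dependence $\sum_{j\neq i}\lambda^{(i)}_j w_j=0$ with all $\lambda^{(i)}_j>0$: for each $j_0\neq i$ write $-w_{j_0}$ as a nonnegative combination of the $w_j$ with $j\neq i$, obtaining a dependence with a strictly positive coefficient at $j_0$, and sum these $n-1$ dependences (this is Stiemke's lemma). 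Putting $\lambda^{(i)}_i:=0$ we obtain a vector $\lambda^{(i)}\in L$ with $\lambda^{(i)}_i=0$ and $\lambda^{(i)}_j>0$ for $j\neq i$.

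Since $s\geq 1$ we have $n\geq 3$, so we may pick three indices, say $1,2,3$. Comparing first and second coordinates shows $\lambda^{(1)},\lambda^{(2)}$ are linearly independent, hence a basis of the two-dimensional space $L$; write $\lambda^{(3)}=a\lambda^{(1)}+b\lambda^{(2)}$. Its third coordinate gives $0=a\lambda^{(1)}_3+b\lambda^{(2)}_3$ with $\lambda^{(1)}_3,\lambda^{(2)}_3>0$, and $\lambda^{(3)}\neq 0$ since its first coordinate is positive, so $a,b$ are both nonzero with opposite signs. If $a>0>b$ then $\lambda^{(3)}_1=b\lambda^{(2)}_1<0$; if $a<0<b$ then $\lambda^{(3)}_2=a\lambda^{(1)}_2<0$; in either case $\lambda^{(3)}$ has a strictly negative coordinate, contradicting $\lambda^{(3)}_j>0$ for all $j\neq 3$. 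Hence $n\neq s+2$, and therefore $n\geq s+3$.

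I do not expect a genuine obstacle; the only points needing a little care are the fact that a positively spanning configuration admits an everywhere-positive linear dependence (Stiemke's lemma) and the exact dimension count $\dim L=2$ in the case $n=s+2$. As an alternative, one can use the machinery already developed: rescale the $w_i$ by positive rationals (Lemma~\ref{scale_one_vector}) so that $\sum w_i=0$ and apply Lemma~\ref{dual_to_polygon}; the Gale-dual point configuration in $\mathbb{Q}^{n-s-1}$ is then the vertex set, without repetitions, of a full-dimensional convex polytope, and a convex polytope of dimension $0$ or $1$ has at most two vertices, which forces $n-s-1\geq 2$.
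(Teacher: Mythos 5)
Your proof is correct, but it takes a genuinely different and considerably longer route than the paper's. The paper works directly with the first form of the definition: pick any $s-1$ members of the collection, take a linear hyperplane $H$ containing them, and note that each of the two open halfspaces determined by $H$ must contain at least two further members; those four cannot coincide with the $s-1$ chosen ones (which lie on $H$), so $n\geq(s-1)+4=s+3$ in one line. You instead use the second form of the definition, translate ``not contained in a closed halfspace'' into ``positively spanning'', get $n\geq s+2$ from the fact that $s$ vectors never positively span $\mathbb{Q}^s$, and then exclude $n=s+2$ by analysing the two-dimensional kernel $L$ of the coefficient matrix via the strictly positive dependences $\lambda^{(i)}$ supported off the index $i$. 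All of these steps are sound (the existence of an everywhere-positive dependence by summing the expressions of each $-w_{j_0}$, the count $\dim L=2$, and the sign analysis of $\lambda^{(3)}=a\lambda^{(1)}+b\lambda^{(2)}$ all check out, and repeated vectors in the collection cause no trouble since everything is indexed by members rather than by distinct vectors); what your argument buys is a self-contained piece of linear algebra that never appeals to a hyperplane through chosen members, at the cost of the case split and kernel bookkeeping. Your alternative sketch --- rescale by a strictly positive dependence to get zero sum, then apply Lemmas~\ref{scale_one_vector} and~\ref{dual_to_polygon} and count vertices of a polytope of dimension $n-s-1\leq 1$ --- is a third valid route that reuses the Gale machinery already set up in the paper.
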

\begin{proof}
Suppose $s \neq 0$ and consider a hyperplane spanned by $s-1$ vectors in the collection. Then by definition there are at least 2 vectors on both sides of this hyperplane thus there are at least $s+3$ elements in the collection.
\end{proof}

\begin{lemma}\label{projection}
Suppose $w_1, \ldots w_s \in M, M=A\oplus B$. 
If the collection $\{w_i\}$ is positively 2-spanning then the collection of their projections $\{proj_{B} (w_i)\}$ onto the second summand is also positively 2-spanning.
\end{lemma}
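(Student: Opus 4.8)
The plan is to test positive 2-spanning of the projected collection against an arbitrary linear hyperplane in $B_{\mathbb{Q}}$ and pull that test back to a hyperplane test for $\{w_i\}$ in $M_{\mathbb{Q}}$ by composing linear functionals with the projection. Write $\pi_B\colon M_{\mathbb{Q}} = A_{\mathbb{Q}}\oplus B_{\mathbb{Q}} \to B_{\mathbb{Q}}$ for the projection onto the second summand, so that $\operatorname{proj}_B(w_i)=\pi_B(w_i)$, and work with the first form of the definition, in terms of the two open halfspaces cut out by a linear hyperplane.

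First I would fix an arbitrary linear hyperplane $H'\subset B_{\mathbb{Q}}$ and a nonzero linear functional $\ell'\colon B_{\mathbb{Q}}\to\mathbb{Q}$ with $H'=\ker\ell'$, so that the open halfspaces are $\{\ell'>0\}$ and $\{\ell'<0\}$. Set $\ell:=\ell'\circ\pi_B\colon M_{\mathbb{Q}}\to\mathbb{Q}$. Since $\pi_B$ is surjective and $\ell'\neq 0$, the functional $\ell$ is nonzero, hence $H:=\ker\ell$ is a linear hyperplane in $M_{\mathbb{Q}}$. The key (trivial) identity is $\ell(w_i)=\ell'(\pi_B(w_i))=\ell'(\operatorname{proj}_B(w_i))$ for every $i$, so the sign of $\ell$ on $w_i$ agrees with the sign of $\ell'$ on $\operatorname{proj}_B(w_i)$. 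Because $\{w_i\}$ is positively 2-spanning with respect to $H$, each of $\{i:\ell(w_i)>0\}$ and $\{i:\ell(w_i)<0\}$ has at least two elements; by the sign identity the very same indices show that $\{i:\ell'(\operatorname{proj}_B(w_i))>0\}$ and $\{i:\ell'(\operatorname{proj}_B(w_i))<0\}$ each have at least two elements. Thus both open halfspaces of $H'$ contain at least two members of $\{\operatorname{proj}_B(w_i)\}$, and since $H'$ was arbitrary the projected collection is positively 2-spanning.

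I do not expect any real obstacle; the only point needing a word of care is that $\ell=\ell'\circ\pi_B$ is genuinely nonzero, which is exactly surjectivity of $\pi_B$ onto $B_{\mathbb{Q}}$. Note also that indices $i$ with $\operatorname{proj}_B(w_i)=0$ are harmless: such $\operatorname{proj}_B(w_i)$ lie on every $H'$ and are simply not counted, while the argument only ever invokes the indices on which $\ell$ (equivalently $\ell'$) is strictly positive or strictly negative. If one prefers the "delete one vector" form of the definition, the same pullback works verbatim: were deleting $\operatorname{proj}_B(w_j)$ to leave the remaining projections inside the closed halfspace $\{\ell'\geq 0\}$, then deleting $w_j$ would leave $\{w_i:i\neq j\}$ inside $\{\ell\geq 0\}$, contradicting the hypothesis on $\{w_i\}$.
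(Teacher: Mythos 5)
Your proof is correct and is exactly the argument the paper has in mind when it says the lemma ``immediately follows from the definition'': pulling back a linear functional on $B_{\mathbb{Q}}$ through the projection and matching signs. No issues.
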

The proof immediately follows from the definition.

\section{Affine toric varieties with an action of the group $SL_3$ with an open orbit}\label{n=3}

\begin{propos}\label{prop_n_3}
a) Every $(SL_3\times Q)$-module $V$ with an open $SL_3\times Q$-orbit for which the condition~$\ast$ holds is either one of the following, or one of the conjugate to them:
\begin{enumerate}
\item \begin{enumerate}
\item $\{0\}$, where $\dim Q = 0$;
\item $\Lambda_1$, where $\dim Q = 0$;
\item $\Lambda_1 \oplus \Lambda_1$, where $\dim Q = 0$;
\item $\Lambda_1 \oplus \Lambda_1 \oplus \underbrace{\Lambda_0 \oplus \ldots \oplus \Lambda_0}_r ,$ where $r=0,1$ and $\dim Q = 1$;
\item $\Lambda_1 \oplus \Lambda_1 \oplus \underbrace{\Lambda_0 \oplus \ldots \oplus \Lambda_0}_r ,$ where $r=2$ and $\dim Q = 2$;
\end{enumerate}
\item 
\begin{enumerate}
\item $\Lambda_1 \oplus \Lambda_1 \oplus \Lambda_1 \oplus \underbrace{\Lambda_0 \oplus \ldots \oplus \Lambda_0}_r ,$ where $r=0$ and $\dim Q = 1$;
\item $\Lambda_1 \oplus \Lambda_1 \oplus \Lambda_1 \oplus \underbrace{\Lambda_0 \oplus \ldots \oplus \Lambda_0}_r ,$ where $r=0,1$ and $\dim Q = 2$;
\item $\Lambda_1 \oplus \Lambda_1 \oplus \Lambda_1 \oplus \underbrace{\Lambda_0 \oplus \ldots \oplus \Lambda_0}_r ,$ where $r=2$ and $\dim Q = 3$;
\end{enumerate}
\item $\underbrace{\Lambda_1 \oplus \ldots \oplus \Lambda_1}_{l-1} \oplus (\Lambda_1)^{\ast},$ where $l=2,3$ and $\dim Q = l-1$,
\end{enumerate}
where we assume that $Q$ acts with an open orbit on $\Theta(V)$ as in Prop.~\ref{case_dim_3}.\\
b) Every of the listed cases exists, that is, there is such a group $\overline{M}$ and a set of weights that $Q$ acts with an open orbit on $\Theta(V)$ and the condition~$\ast$ holds.
\end{propos}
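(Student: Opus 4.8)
The plan is to prove part a) by combining the classification of prehomogeneous $SL_3\times Q$-modules from Proposition~\ref{case_dim_3} with the constraints imposed by condition~$\ast$, and then to prove part b) by exhibiting, for each surviving case, an explicit weight matrix. So I would start from the three families listed in Proposition~\ref{case_dim_3} and, in each, work out what condition~$\ast$ forces. The crucial reduction is that condition~$\ast$ refers to the weights $\overline{w_i}\in\overline M=\mathfrak X(Q)$ of \emph{all} coordinate functions of $V$, not just those of $\Theta(V)$; but the non-trivial $SL_3$-isotypic pieces ($\Lambda_1$, $(\Lambda_1)^\ast$) contribute a $Q$-weight with multiplicity $3$ each (one per coordinate of $\kk^3$), and these three copies carry the \emph{same} $Q$-weight. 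This has two consequences I would exploit: first, a weight occurring with multiplicity $\geq 2$ is automatically harmless for the ``delete one vector and still span'' clause among its own copies, and second — more importantly — projecting to $M_{\mathbb Q}=\overline M/\mathrm{Tor}\otimes\mathbb Q$ and using Lemma~\ref{d+3} and Lemma~\ref{projection} bounds $\dim Q$ from above in terms of how many genuinely independent weight-directions the module can supply.

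Concretely: in family (1) of Proposition~\ref{case_dim_3} with $l$ copies of $\Lambda_1$ ($l=0,1,2$) and $r$ trivial summands, the relevant vector configuration in $M_{\mathbb Q}$ (dimension $=\dim Q=:s$) consists of the $l$ repeated $\Lambda_1$-weights together with the $r$ weights of the $\Lambda_0$'s, with $Q$ acting with a dense orbit on $\Theta(V)=\Lambda_0^{\oplus r}$, which forces $s\leq r$ and in fact (generic orbit on a vector space under a quasitorus) the $r$ trivial weights to span $M_{\mathbb Q}$, so $s\leq r$. Positive $2$-spanning via Lemma~\ref{d+3} then gives (number of vectors) $\geq s+3$, i.e. $l+r\geq s+3$ when $s\neq 0$; I would run through the small possibilities $l\in\{0,1,2\}$, using also the second clause of $\ast$ (deleting any single $\overline w_i$ still generates $\overline M$, which rules out a coordinate whose weight is a non-torsion-free ``bridge'') to pin down exactly the list $(1)(\mathrm a)$–$(1)(\mathrm e)$, and to see that when $l=2$ one needs the $\Lambda_1$-weights too, forcing the stated values $r=0,1\Rightarrow\dim Q=1$ and $r=2\Rightarrow\dim Q=2$. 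Family (2) (three copies of $\Lambda_1$, with $\langle\det\rangle$ now a genuine constraint inside $\Theta(V)$) is handled the same way but now the three $\Lambda_1$-weights $u_1,u_2,u_3$ satisfy that $u_1+u_2+u_3$ is the weight of $\det$, which must be hit by the dense $Q$-orbit on $\langle\det\rangle\oplus\Lambda_0^{\oplus r}$; this is what shifts the dimension count by one and produces $(2)(\mathrm a)$–$(2)(\mathrm c)$. Family (3), with a dual copy $(\Lambda_1)^\ast$, is the purely combinatorial ``pairings $g_i$'' case, with no $\Lambda_0$'s allowed (any would break $2$-spanning or the generation clause) and $\dim Q=l-1$.

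For part b) I would, for each item on the list, simply write down a concrete $\overline M$ (a product of a free group and possibly a finite cyclic group) and an explicit list of weights $\overline w_i$, then verify the two clauses of condition~$\ast$ directly: positive $2$-spanning of the images $w_i$ in $M_{\mathbb Q}$ by checking every linear hyperplane has $\geq 2$ of the vectors strictly on each side (equivalently, by Lemma~\ref{dual_to_polygon}, that the Gale-dual points are the vertices of a polytope with no repetitions — often the quickest check), and ``delete-one-still-generates'' by a determinant/Smith-normal-form computation. For instance $(1)(\mathrm d)$ with $r=1$: take $\overline M=\zz$, weights $(1,1,1,-1,-1,-1,2)$ for the two $\Lambda_1$'s and one $\Lambda_0$ — here one checks $2$-spanning on the real line and that removing any entry still leaves a set of integers with gcd $1$; the $r=0$ subcase and the higher-rank cases $(1)(\mathrm e)$, $(2)$, $(3)$ are analogous but in $\zz^2$ or $\zz^3$, where I would pick the $w_i$ to be the vertices of a suitable prism or simplex-with-its-center so that Lemma~\ref{dual_to_polygon} applies transparently. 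One must also confirm the $SL_3\times Q$-module genuinely has a dense orbit, but that is exactly the content of the $\Theta(V)$-hypothesis carried over from Proposition~\ref{case_dim_3}, so nothing new is needed there.

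The main obstacle I anticipate is the bookkeeping in part a): making the ``$\dim Q\leq$ something'' bound tight enough to exclude spurious cases requires carefully separating the contribution of the (multiplicity-$3$, hence individually-deletable) $\Lambda_1$-weights from that of the $\Lambda_0$-weights in both clauses of $\ast$ simultaneously, and treating the $\det$/pairing constraints on $\Theta(V)$ correctly — these reduce the number of ``free'' weight directions by one and are the reason families (2) and (3) get a $\dim Q=l-1$ rather than $\dim Q=l$. I would organize this as a short case analysis on $(l,r)$ with a uniform appeal to Lemmas~\ref{d+3}, \ref{projection}, and~\ref{proper_subgroup}, so that each exclusion is a one-line inequality or a one-line failure of generation.
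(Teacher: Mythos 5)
Your overall strategy coincides with the paper's: reduce to the three families of Proposition~\ref{case_dim_3}, extract inequalities from condition~$\ast$ via Lemmas~\ref{d+3} and~\ref{projection}, and exhibit explicit weights for part~b). However, the quantitative core of part~a) as you state it is wrong in two places, and the correct version of your inequalities is too weak to produce the list. First, a dense $Q$-orbit on $\Theta(V)=\Lambda_0^{\oplus r}$ forces the $r$ weights to be \emph{linearly independent} in $\mathfrak X(Q)\otimes\mathbb Q$, hence $r\leq\dim Q$ --- not ``$s\leq r$'' and not that they span $M_{\mathbb Q}$, as you write. Second, Lemma~\ref{d+3} counts vectors with multiplicity, and each $\Lambda_1$ contributes its weight three times, so it yields $3l+r\geq s+3$, not $l+r\geq s+3$. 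Your inequality as written would wrongly exclude case (1)(d) with $r=0$ (for $\Lambda_1\oplus\Lambda_1$, $\dim Q=1$, the configuration $\{v_1,v_1,v_1,v_2,v_2,v_2\}$ with $v_1>0>v_2$ is positively $2$-spanning, yet $l+r=2<4$), while the corrected count $3l+r\geq s+3$ fails to force, e.g., $r=2$ when $l=2$ and $\dim Q=2$. So ``running through the small possibilities'' does not close.

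What is missing is the sharper two-step count the paper uses. (i) Let $A$ be the span of the $\Lambda_1$-weights, $a=\dim A$; projecting onto a complement of $A$ (Lemma~\ref{projection}) and applying Lemma~\ref{d+3} to the at most $r$ nonzero projected $\Lambda_0$-weights gives either $d-a=0$ or $r\geq d-a+3$, and the latter contradicts $r\leq d$ (resp.\ $r+1\leq d$, $r+l-1\leq d$ in families (2) and (3)) since $a\leq l\leq 3$; hence $a=d$, i.e.\ the $\Lambda_1$-weights already span. (ii) Take a linear hyperplane through $d-1$ of the $v_i$; because each $v_i$ occurs with multiplicity $3$, a single leftover $v_i$ strictly on one side already supplies the two required vectors there, whereas otherwise two of the $w_j$ are needed, giving $2(l-d+1)+r\geq 4$, i.e.\ $l+\tfrac r2\geq d+1$. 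These two inequalities together with $r\leq d$ (and its shifted analogues) are exactly what pins down the admissible $(l,r,d)$. Part~b) of your proposal is sound in spirit and your sample weights for (1)(d) do satisfy condition~$\ast$, but part~a) has a genuine gap as it stands.
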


\begin{proof}
Let $d$ denote $\dim Q$.

Let us start with the first case from Prop.~\ref{case_dim_3}: let $$V=\underbrace{\Lambda_1 \oplus \ldots \oplus \Lambda_1}_l \oplus \underbrace{\Lambda_0 \oplus \ldots \oplus \Lambda_0}_r,$$ where $l=0,1,2$ and $Q$ acts with an open orbit on $\underbrace{\Lambda_0 \oplus \ldots \oplus \Lambda_0}_r$. Let us skip the case $l=0$ which immediately comes down to case (1).

Let us denote the $Q$-weights on the summands as $\overline{v_1}, \ldots, \overline{v_l}, \overline{w_1}, \ldots, \overline{w_r}$ where the first $l$ correspond to $\Lambda_1$ and the next $r$ correspond to $\Lambda_0$. Thus we want to find inequalities for $r$ by checking the condition~$\ast$ for the collection $$\overline{v_1}, \overline{v_1}, \overline{v_1}, \overline{v_2}, \overline{v_2}, \overline{v_2}, \ldots, \overline{v_l}, \overline{v_l}, \overline{v_l}, \overline{w_1}, \overline{w_2}, \ldots ,\overline{w_r}.$$

Now let us consider the subspace $A$ generated by $v_i$ and its dimension $a$. Then after applying Lemma~\ref{projection} we obtain a $(d-a)$-dimensional space with no more than $r$ nonzero projections that come from $w_1, \ldots, w_r$. By applying Lemma~\ref{d+3} we see that either $d-a=0$ or $r \geq d-a+3$. Suppose the second case holds. As $Q$ acts with an open orbit on $\underbrace{\Lambda_0 \oplus \ldots \oplus \Lambda_0}_r$, it immediately follows that $r \leq d$. Combining the inequalities we see that $d-a+3 \leq d$, which means that $a \geq 3$. On the other hand, $a \leq l \leq 2$. Thus $r \geq d-a+3$ is impossible, so only the case $d-a=0$ remains. 

\begin{lemma}\label{max}
If $l \geq d$ the following inequality holds: $l + \frac{r}{2} \geq d+1$.
\end{lemma}
\begin{proof}
As $l \geq d$ we may pick $d-1$ element $v_1, \ldots, v_{d-1}$ and consider a linear hyperplane $\alpha$ through them. On each side there are at least 2 elements, thus there is either 1 element of the remaining $v_d, \ldots, v_{l-(d-1)}$ or 2 elements of $w_1, \ldots, w_r$. Taking into account both sides we obtain $2(l-(d-1))+r \geq 4$. This inequality is equivalent to the one claimed.
\end{proof}

If $d=a=0$ then $r$ is also $0$ and we are in the case (2) or (3). If $d=a=1$ then $r \leq d$ thus $r=0,1$. By Lemma~\ref{max} we get $l \geq d+1 = 2$, thus $l=2$. We obtained the case (4). Finally, if $d=a=2$ then as $l \geq a$ we have $l=2$. By Lemma~\ref{max} we get $2+\frac{r}{2} \geq 2+1$ thus $r \geq 2$. As $r \leq d = 2$ we have $r=2$. This is the case (5).

Now let us consider the second case from the Prop.~\ref{case_dim_3}: let $$V = \Lambda_1 \oplus \Lambda_1 \oplus \Lambda_1 \oplus \underbrace{\Lambda_0 \oplus \ldots \oplus \Lambda_0}_r ,$$ where $Q$ acts with an open orbit \\ on $\left\langle det\right\rangle \oplus \underbrace{\Lambda_0 \oplus \ldots \oplus \Lambda_0}_r$. We keep the notation from the previous case. By applying Lemma~\ref{projection} we obtain a $(d-a)$-dimensional space with no more than $r$ nonzero projections that come from $w_1, \ldots, w_r$. By applying Lemma~\ref{d+3} we see that either $d-a=0$ or $r \geq d-a+3$. Suppose the second case holds. As $Q$ acts with an open orbit on $\left\langle det\right\rangle \oplus \underbrace{\Lambda_0 \oplus \ldots \oplus \Lambda_0}_r$, we obtain $d \geq r+1$. Combining the inequalities we see that $d-a+3 \leq d-1$, which means that $a \geq 4$. On the other hand, $a \leq l = 3$, thus $r \geq d-a+3$ is impossible, so only the case $d-a=0$ remains.

By Lemma~\ref{max} with $l=3$ we get $d \leq 3$. As $d \geq r+1$ we know that $d \geq 1$. If $d=a=1$ then as $r+1 \leq d$ we obtain $r=0$. If $d=a=2$ by Lemma~\ref{max} we get $3+\frac{r}{2} \geq 2+1$ thus it does not add any restrictions. As $r+1 \leq d = 2$ we have $r \leq 1$. Finally, if $d=a=3$ by Lemma~\ref{max} we get $3+\frac{r}{2} \geq 3+1$ thus $r \geq 2$. On the other hand we know that $r+1 \leq d = 3$. Thus $r=2$.

Now let us consider the last case from Prop.~\ref{case_dim_3}: let $$V=\underbrace{\Lambda_1 \oplus \ldots \oplus \Lambda_1}_{l-1} \oplus (\Lambda_1)^{\ast} \oplus \underbrace{\Lambda_0 \oplus \ldots \oplus \Lambda_0}_r,$$ where $l$ is either $2$ or $3$ and $Q$ acts with an open orbit on $\underbrace{\Lambda_0 \oplus \ldots \oplus \Lambda_0}_r \oplus \left\langle g_1 \right\rangle \oplus \ldots \oplus \left\langle g_{l-1} \right\rangle,$ where $g_i$ is the polynomial that represents the coupling between $i$-th copy of $\Lambda_1$ and $(\Lambda_1)^{\ast}.$ Let us again keep the notation from the previous cases. By applying Lemma~\ref{projection} we obtain a $(d-a)$-dimensional space with no more than $r$ nonzero projections that come from $w_1, \ldots, w_r$. By applying Lemma~\ref{d+3} we see that either $d-a=0$ or $r \geq d-a+3$. Suppose the second case holds. As $Q$ acts with an open orbit on $\underbrace{\Lambda_0 \oplus \ldots \oplus \Lambda_0}_r \oplus \left\langle g_1 \right\rangle \oplus \ldots \oplus \left\langle g_{l-1} \right\rangle,$ we obtain $d \geq r+(l-1)$. Combining the inequalities we see that $d-a+3 \leq d-l+1$, which means that $a \geq l+2$. On the other hand, $a \leq l$, thus $r \geq d-a+3$ is impossible, so only the case $d-a=0$ remains.

As $d \geq r+(l-1) \geq r+1$ we know that $d \geq 1$. If $a=d=1$ then as $d\geq r+1$ we obtain $r=0$. Also as $d \geq l-1$ we see that $l=3$ is impossible in this casem thus $l=2$. If $a=d=2$ then from $d \geq r+1$ we obtain $r \leq 1$, by Lemma~\ref{max} we get $l+\frac{r}{2} \geq 2 + 1 = 3$, thus $l = 3$. As $d \geq r+(l-1) = r+2$ we now get $r=0$.

This ends the proof of part a) as we went through all cases of Prop.~\ref{case_dim_3}. Now to prove b) we present a set of weights in $M=\mathbb{Z}^{d}$ satisfying the constraints. 

\begin{tabular}{cccccc}
Case & $v_1$ & $v_2$ & $v_3$ & $w_1$ & $w_2$\\
\hline
1a & - & - & - & - & - \\
1b & $0$ & - & - & - & - \\
1c & $0$ & $0$ & - & - & - \\
1d & $1$ & $-1$ & - & $1$ if $r=1$ & - \\
1e & $(1,0)$ & $(0,1)$ & - & $(-1,-1)$ & $(-1,-2)$ \\
2a & $1$ & $1$ & $-1$ & - & - \\
2b & $(1,0)$ & $(0,1)$ & $(-1,-2)$ & $(1,0)$ if $r=1$ & - \\
2c & $(1,0,0)$ & $(0,1,0)$ & $(0,0,1)$ & $(-1,-1,-2)$ & $(-1,-2,-1)$ \\
3,$l=2$ & $1$ & $-1$ & - & - & - \\
3,$l=3$ & $(1,0)$ & $(0,1)$ & $(-1,-1)$ & - & - \\
\end{tabular}


\end{proof}

The following theorem immediately follows from Proposition~\ref{prop_n_3} and from Proposition~\ref{result}.

\begin{theorem}\label{thm_n_3}
Every affine toric variety $X$ with an action of the group $SL_3$ with an open orbit is the categorical factor of a module $V(X)$ from  Prop.~\ref{prop_n_3}. Moreover, if $X\underset{SL_3}{\cong}Y$ then $V(X)\underset{SL_3 \times Q}{\cong} V(Y)$.
\end{theorem}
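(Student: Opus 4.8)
The plan is to read the theorem off Proposition~\ref{result} and Proposition~\ref{prop_n_3}(a) for the existence part, and then to upgrade that to the stated well\-definedness/functoriality statement using the uniqueness of the Cox construction together with the rigidity of $SL_3$.

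First I would take an arbitrary affine toric variety $X$ equipped with an $SL_3$-action having an open orbit. Since $SL_3$ is simply connected and semisimple, Proposition~\ref{result} applies: it produces an $(SL_3\times Q)$-module $V$ with an open $SL_3\times Q$-orbit, satisfying condition~$\ast$, together with an $SL_3$-equivariant isomorphism $X\cong V/\!/Q$ for which $V\to V/\!/Q$ is the Cox realisation (this last point being how Proposition~\ref{result} is obtained, via Proposition~\ref{1} and Proposition~\ref{crit}). By Proposition~\ref{prop_n_3}(a), such a $V$ is $SL_3\times Q$-isomorphic to one of the modules on the explicit list (or a conjugate of one). Fixing such a representative and naming it $V(X)$, we get the first assertion: $X$ is the categorical factor by $Q$ of the module $V(X)$ from Proposition~\ref{prop_n_3}.

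Next I would prove the \enquote{moreover} part. The key observation is that $V(X)$ is not merely some module realising $X$ as a quotient: by Proposition~\ref{1} (and the lifting results behind it), $V(X)$ with its $Q$-action \emph{is} the total coordinate space $\overline{X}$ with the action of the characteristic quasitorus, and the $SL_3$-action on $V(X)$ is the canonical lift of the $SL_3$-action on $X$. The Cox ring is unique up to graded isomorphism, so an $SL_3$-equivariant isomorphism $X\underset{SL_3}{\cong}Y$ induces a $Q$-equivariant isomorphism $\overline{X}\cong\overline{Y}$; and the lift of the $SL_3$-action is unique here, since $SL_3$ is connected and semisimple and hence $\operatorname{Hom}(SL_3,Q)$ is trivial, so one cannot twist the lift by a character. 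Therefore the induced isomorphism $\overline{X}\cong\overline{Y}$ is automatically $SL_3\times Q$-equivariant, which is exactly $V(X)\underset{SL_3\times Q}{\cong}V(Y)$.

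The main obstacle is precisely this last paragraph: one must argue cleanly that an $SL_3$-equivariant isomorphism of the varieties lifts to an $SL_3\times Q$-equivariant isomorphism of their total coordinate spaces, i.e. combine functoriality of the Cox construction with uniqueness of the lifted $SL_3$-action; the existence statement in the first two paragraphs is then immediate from the two cited propositions and involves no further computation, all the combinatorics having already been absorbed into Proposition~\ref{prop_n_3}.
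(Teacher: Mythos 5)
Your proposal is correct and follows the same route as the paper, which simply states that the theorem follows immediately from Propositions~\ref{result} and~\ref{prop_n_3}. In fact you supply more detail than the paper does for the \enquote{moreover} clause --- the uniqueness of the Cox realisation together with the uniqueness of the lifted $SL_3$-action (no nontrivial characters of $SL_3$, so the lift cannot be twisted) is exactly the argument the paper leaves implicit.
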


\section{A series of examples with arbitrary large class group rank}\label{n=4}
As we have discussed in the introduction, in the case $n=2$ there is a result due to V.\,L.\,Popov that the variety is either a homogeneous space or there is exactly one divisor outside of the open orbit. This means that the divisor class group in this case has rank 1, in fact, it is shown that is equal to $\mathbb{Z}\oplus\mathbb{Z}_m$. Our result in the previous section shows that in the case $n=3$ at least for the toric varieties the dimension of the characteristic quasitorus is $3$ or less, which means that the rank of the class group is $3$ or less. Now we show in the following theorem that such behaviour does not continue for $n>3$.

\begin{theorem}\label{thm_n_4}
For every $n\geq 4, d \geq 1$ there exists a $SL_n$-embedding with the class group rank $d$.
\end{theorem}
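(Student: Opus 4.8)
The plan is to exhibit, for each $n \geq 4$ and each $d \geq 1$, an explicit $(SL_n \times Q)$-module $V$ with a dense $SL_n \times Q$-orbit satisfying the condition $\ast$, and then invoke Proposition~\ref{result} to conclude that $V/\!/Q$ is an affine toric $SL_n$-embedding with $\operatorname{Cl}(X) \cong \mathfrak{X}(Q)$ of rank $d$. The natural candidate, suggested by Proposition~\ref{n_by_n}, is $V = \underbrace{\Lambda_1 \oplus \ldots \oplus \Lambda_1}_{n} \oplus \underbrace{\Lambda_0 \oplus \ldots \oplus \Lambda_0}_{r}$ for a suitable number $r$ of trivial summands, with $\dim Q = d$. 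By Proposition~\ref{n_by_n} this is a prehomogeneous vector space of $SL_n \times Q$ as soon as $Q$ acts with a dense orbit on $\langle \det \rangle \oplus \underbrace{\Lambda_0 \oplus \ldots \oplus \Lambda_0}_{r}$, an $(r+1)$-dimensional space; so I will take $r = d - 1$, giving $\dim Q = d$ and forcing $Q$ to act on an $r+1 = d$-dimensional toric affine space, which is possible.

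The heart of the argument is to choose the $Q$-weights $\overline{v_1}, \ldots, \overline{v_n}$ (on the $\Lambda_1$-summands, each appearing with multiplicity $n$ in the condition-$\ast$ collection because $\dim \Lambda_1 = n$) and $\overline{w_1}, \ldots, \overline{w_{d-1}}$ (on the trivial summands) in $\overline{M} = \mathbb{Z}^d$ so that: (i) the image configuration $\{v_i^{\times n}, w_j\}$ in $M_{\mathbb{Q}} = \mathbb{Q}^d$ is positively $2$-spanning, (ii) deleting any single weight still leaves a generating set of $\mathbb{Z}^d$, and (iii) $Q$ acts with a dense orbit on $\langle \det \rangle \oplus \bigoplus \Lambda_0$, i.e. the weights $\det = v_1 + \cdots + v_n$ and $w_1, \ldots, w_{d-1}$ form a $\mathbb{Z}$-basis of $\mathbb{Z}^d$. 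A clean choice: let $e_1, \ldots, e_d$ be the standard basis; set $w_j = e_{j+1}$ for $j = 1, \ldots, d-1$; and choose $v_1, \ldots, v_n$ so that $v_1 + \cdots + v_n = e_1$ while the $v_i$ themselves, together with the $w_j$, are positively $2$-spanning. For instance take $v_1 = e_1 + e_2 + \cdots + e_d$, $v_2 = -e_2, v_3 = -e_3, \ldots, v_d = -e_d$, and $v_{d+1} = \cdots = v_n = 0$ — but the zero weights are harmless for $2$-spanning only if the \emph{nonzero} ones already $2$-span, so instead I will pair them up, e.g. $v_{d+1} = e_2, v_{d+2} = -e_2$ (possible since $n \geq d+1$ when $d \leq n-1$; for large $d$ one instead spreads the mass so that each basis direction $e_k$ receives at least two strictly-positive and two strictly-negative contributions among the $n$ vectors $v_i$ and the $w_j$). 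Since $n \geq 4$ gives genuine room, and since I may also use the multiplicity $n$ of each $v_i$, verifying positive $2$-spanning reduces to checking that no coordinate hyperplane, and more generally no hyperplane, has all-but-one of the $v_i, w_j$ weakly on one side — a finite linear-algebra check for the explicit list.

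I expect the main obstacle to be the bookkeeping that makes conditions (i)–(iii) hold \emph{simultaneously} for all $d$ in a uniform family, especially reconciling (iii) — which pins down $\det = \sum v_i$ and the $w_j$ to a basis — with (i), which wants the $v_i$ themselves to be spread out in all directions and on both sides of every hyperplane. The cleanest route is probably: first fix $w_1, \ldots, w_{d-1}$ to be $d-1$ of the standard basis vectors and reserve the last basis vector $e_1$ for $\sum v_i$; then, writing $v_i = e_1/n + u_i$ is not integral, so instead distribute as $v_1 = e_1 - \sum_{k=2}^{d} e_k$, $v_2 = e_2, \ldots, v_d = e_d$ (these $d$ already sum to $e_1$ and, together with the $w_j = e_{j+1}$, are easily checked to be positively $2$-spanning in $\mathbb{Q}^d$: each $e_k$ appears positively in $v_k$ and in $w_{k-1}$, negatively in $v_1$ — giving at least two on the negative side once one also uses $v_{d+1}, v_{d+2}$), and set the remaining $v_{d+1}, \ldots, v_n$ to $+e_1, -e_1, +e_2, -e_2, \ldots$ cyclically, summing to $0$ so (iii) is preserved; using $n \geq 4$ and the multiplicity-$n$ appearance of each weight, this yields at least two configuration members strictly on each side of every linear hyperplane, and the "delete one" generating condition is immediate because even after removing one weight the set still contains a full basis. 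Once the weight configuration is fixed, parts (i) and (ii) are a routine verification, part (iii) is the basis statement above, and the prehomogeneity follows from Proposition~\ref{n_by_n}; then Proposition~\ref{result} finishes the proof, and the stabilizer of the generic point is trivial precisely because $Q$ acts freely on the relevant open set and the $SL_n$-stabilizer in the dense orbit can be read off to be trivial from the explicit prehomogeneous structure.
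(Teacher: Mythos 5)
Your overall strategy coincides with the paper's: take $V=\underbrace{\Lambda_1\oplus\dots\oplus\Lambda_1}_{n}\oplus\underbrace{\Lambda_0\oplus\dots\oplus\Lambda_0}_{d-1}$ with $\dim Q=d$, get prehomogeneity from Proposition~\ref{n_by_n} by making $\chi_{\det}=v_1+\dots+v_n$ and $w_1,\dots,w_{d-1}$ linearly independent, verify condition~$\ast$, and conclude via Proposition~\ref{result}. The problem is that the one step that actually carries the weight of the theorem --- producing, for \emph{every} $d\geq 1$ and every $n\geq 4$, a concrete positively $2$-spanning configuration in $\mathbb{Q}^d$ compatible with the independence requirement --- is not accomplished. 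Your explicit candidate ($w_j=e_{j+1}$, $v_1=e_1-\sum_{k\geq 2}e_k$, $v_k=e_k$ for $2\leq k\leq d$, leftovers $\pm e_1,\pm e_2,\dots$) only makes sense when $n\geq d$, so it says nothing about the regime $d>n$, which is exactly where the theorem has content (unbounded class group rank for fixed $n$). Even where it is defined it fails: for $n=d$ there are no leftover $v_i$, and the open half-space $\{x_1<0\}$ contains \emph{no} vector of the configuration, since only $v_1$ has a nonzero first coordinate and it is positive. More generally, your verification only inspects coordinate directions ("each $e_k$ receives two positive and two negative contributions"), which is a necessary but far from sufficient condition for positive $2$-spanning, and the high multiplicity of each $v_i$ cannot rescue a half-space that contains no distinct weight at all. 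The fallback "spread the mass" for large $d$ is a statement of intent, not a construction.

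The paper closes precisely this gap with Gale duality, which is why Section~\ref{Gale} exists: take the vertex set of a convex polygon with $d+3$ vertices in $\mathbb{Q}^2$; its Gale transform is a collection of $d+3$ vectors in $\mathbb{Q}^{d}$ with zero sum that is automatically positively $2$-spanning by Lemma~\ref{dual_to_polygon}, uniformly in $d$. Four of these vectors are assigned to $v_1,\dots,v_4$ (with $v_5=\dots=v_n=0$, harmless since the nonzero ones already $2$-span) and the remaining $d-1$ to the $w_j$. The zero-sum relation then forces $\chi_{\det}$ to be dependent on the $w_j$, which is repaired by multiplying every weight except $v_1$ by $2$ (Lemma~\ref{scale_one_vector} preserves $2$-spanning, Lemma~\ref{general_position} gives the needed independence of $v_1,w_1,\dots,w_{d-1}$), and the "delete one weight still generates" condition follows from the multiplicities and the zero-sum relation. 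If you want to salvage a direct construction without Gale duality, you must exhibit $d+3$ explicit integer vectors in $\mathbb{Z}^d$ whose positive hull, after deleting any one of them, is still all of $\mathbb{Q}^d$, and check this against arbitrary hyperplanes, not just coordinate ones; as written, your proof does not do this.
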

\begin{proof}

Let us consider the module $\underbrace{\kk^n \oplus \dots \oplus \kk^n}_n \oplus \underbrace{\kk \oplus \dots \oplus \kk}_{d-1}$. Let $x_{ij}$ denote the coordinates in $i$-th copy of the module $\kk^n$ and $y_i$ denote the coordinate in $i$-th copy of $\kk$. We equip the module with a $d$-dimensional torus $T$ acting with weights $v_1, \ldots, v_n, w_1, \ldots, w_{d-1}$. We want to construct such weights that the condition~$\ast$ holds and there is an open orbit of $SL_n \times Q$.

Let us set all the weights $v_i$ for $i \geq 5$ to be equal $0$. Now consider a convex polygon with $d+3$ vertices in $\mathbb{Q}^2$ and let the weights $v_1, v_2, v_3, v_4, w_1, \ldots, w_{d-1}$ be equal to the Gale transform image of the collection of its vertices. They generate some lattice, isomorphic to $\mathbb{Z}^d$, let us denote it as $W$. Lemma~\ref{dual_to_polygon} ensures that the configuration $\{v_1,v_2,v_3,v_4,w_1,\ldots,w_{d-1}\}$ is positively 2-spanning.

For the orbit $SL_n \times Q$ to be open we need the weights of the independent generators of the algebra of $SL_n$-invariant, that is $y_1, \ldots, y_{d-1}, \det (x_{ij})$, to be linearly independent, as Proposition~\ref{n_by_n} tells us. Those weights are $w_1, \ldots, w_{d-1}, nv_1+nv_2+\ldots+nv_n$. However by construction we have $v_1+v_2+v_3+v_4 = -(w_1+\ldots+w_{d-1})$ and all other $v_i$ are equal $0$. Thus we have to tinker with our set of weights as follows. We multiply all the weights except of $v_1$ by a factor of $2$, denoting the new weights as $v_i'$ and $w_j'$. By Lemma~\ref{scale_one_vector} the result is still positively 2-spanning. By Lemma~\ref{general_position} the vectors $v_1, w_1, w_2, \ldots, w_{d-1}$ are linearly independent, thus the vector
$$\chi_{det}=n(v_1'+v_2'+\ldots+v_n')=n(v_1+2v_2+\ldots+2v_n)=$$
$$=-2n(w_1+\ldots+w_{d-1})-nv_1=-n(w_1'+\ldots+w_{d-1}')-nv_1'$$
is linearly independent with the system $w_1', w_2', \ldots, w_{d-1}'$.

Let us denote the lattice generated by new weights as $W'$. Now we need to check the condition~$\ast$ for the new set of weights. By Lemma~\ref{scale_one_vector} the new configuration is positively 2-spanning. It remains to establish that if we delete any element, the remaining do generate $W'$. As the configuration contains multiple copies of $v_i'$, their deletion cannot change the generated lattice. Thus we consider only the case of deleting some $w_j'$. But $w_j' = - (v_1'+v_2'+\ldots+v_n')-(w_1'+\ldots+w_{j-1}+w_{j+1}+\ldots+w_{d-1}')$, which means it can be obtained from the other weights, which implies the lattice remains the same. This concludes our proof, as the obtained configuration satisfies condition~$\ast$ and provides an open $SL_n \times Q$-orbit.

\end{proof}

\printbibliography

\end{document}